\newtheorem{theorem}{Theorem}
\newtheorem{lemma}{Lemma}
\newtheorem{corollary}{Corollary}
\theoremstyle{definition}
\newtheorem{definition}{Definition}
\theoremstyle{remark}
\newtheorem{remark}{Remark}
\begin{document}

\author{Amar Debbouche$^{a}$\\
\texttt{amar$_{-}$debbouche@yahoo.fr}
\and
Delfim F. M. Torres$^{b}$\\
\texttt{delfim@ua.pt}}

\title{Sobolev Type Fractional Dynamic Equations and Optimal Multi-Integral Controls 
with Fractional Nonlocal Conditions\thanks{This is a preprint of a paper whose final 
and definite form will appear in \emph{Fract. Calc. Appl. Anal.} 
Paper submitted 10/April/2014; accepted for publication 21/Sept/2014.}}

\date{$^{a}$Department of Mathematics, Guelma University,
24000 Guelma, Algeria\\[0.3cm]
$^{b}$\text{Center for Research and Development in Mathematics and Applications (CIDMA)}\\
Department of Mathematics, University of Aveiro, 3810--193 Aveiro, Portugal}

\maketitle


\begin{abstract}
We prove existence and uniqueness of mild solutions
to Sobolev type fractional nonlocal
dynamic equations in Banach spaces. The Sobolev
nonlocal condition is considered in terms of a
Riemann--Liouville fractional derivative.
A Lagrange optimal control problem is considered,
and existence of a multi-integral solution obtained.
Main tools include fractional calculus,
semigroup theory, fractional power of operators,
a singular version of Gronwall's inequality,
and Leray--Schauder fixed point theorem.
An example illustrating the theory is given.

\bigskip

\noindent \textbf{2010 Mathematics Subject Classification:} 26A33, 49J15.

\medskip

\noindent \textbf{Keywords:} Sobolev type equations,
fractional evolution equations, optimal control,
nonlocal conditions, mild solutions.
\end{abstract}


\section{Introduction}

Fractional differential equations have attracted the attention of scientists,
in reason to their accurate, helpful, and successful results in fields such
as mathematical modelling of physical, engineering, and biological phenomena.
Both theoretical and practical aspects of the subject are being explored.
In particular, fractional differential equations provide an excellent tool to describe hereditary properties
of various materials and processes, finding numerous applications in viscoelasticity,
electrochemistry, porous media, and electromagnetism.
The reader interested in the development of the theory, methods, and applications
of fractional calculus is referred to the books
\cite{AMA.1,AMA.2,AMA.3,AMA.4,AMA.5,AMA.6,AMA.7,AMA.8,AMA.9}
and to the papers \cite{AMA.10,AMA.11,AMA.12,AMA.13,AMA.14,AMA.15,AMA.16,AMA.17}.
For recent developments in the area of nonlocal fractional differential equations and inclusions
see \cite{MR2897776,AMA.18,AMA.19,AMA.20,AMA.21,AMA.22,AMA.23} and references therein.

The study of fractional control systems and fractional optimal control problems
is under intense investigation \cite{AMA.24,AMA.25,AMA.26}.
Those control systems are most often based on the principle of feedback,
whereby the signal to be controlled is compared to a desired reference
signal and the discrepancy used to compute corrective control actions \cite{AMA.27}.
The fractional optimal control of a distributed system is an optimal control problem
for which the system dynamics is defined by means of fractional differential equations \cite{AMA.28}.
In our previous work \cite{AMA.21}, we introduced multi-delay controls
and we investigated a nonlocal condition for fractional semilinear control systems.
The existence of optimal pairs for systems governed by fractional evolution equations with initial
and nonlocal conditions, is also presented by Wang et al. \cite{AMA.23} and Wang and Zhou \cite{AMA.29}.
Here we are concerned with the study of fractional nonlinear evolution equations
subject to fractional Sobolev nonlocal conditions. Sobolev type semilinear equations
serve as an abstract formulation of partial differential equations,
which arise in various applications, such as in the flow of fluid through fissured rocks, thermodynamics,
and shear in second order fluids. Moreover, fractional differential equations of Sobolev type appear in the
theory of control of dynamical systems, when the controlled system and/or the controller is described
by a fractional differential equation of Sobolev type \cite{MR3181063}.
The mathematical modeling and simulations of such systems and processes
are based on the description of their properties in terms of fractional differential equations of Sobolev type. These
new models are claimed to be more adequate than previously used integer order models, so fractional order differential equations
of Sobolev type have been investigated by many researchers, e.g., in \cite{AMA.30,MR3154621,MR3129325,AMA.31}.
Motivated by these facts, we introduce here a new nonlocal fractional condition of Sobolev type
and we present the optimal control of multiply integrated Sobolev type nonlinear
fractional evolution equations. The problem requires to formulate a new solution operator
and its properties, such as boundedness and compactness. Further, we present a class of admissible
multi-integral controls and we prove, under an appropriate set of sufficient conditions,
an existence result of optimal multi-integral controls for a Lagrange optimal control problem,
denoted in the sequel by \eqref{LP}. More precisely, we are concerned with the study
of fractional nonlinear evolution equations
\begin{equation}
\label{eq:1.1}
^CD^{\alpha}_{t}[Lu(t)]=Eu(t)+f(t, W(t))
\end{equation}
subject to fractional Sobolev nonlocal conditions
\begin{equation}
\label{eq:1.2}
^{L}D^{1-\alpha}_{t}[Mu(0)]=u_{0}+h(u(t)),
\end{equation}
where $^CD^{\alpha}_{t}$ and $^{L}D^{1-\alpha}_{t}$ are, respectively,
Caputo and Riemann--Liouville fractional derivatives
with $0<\alpha\leq1$ and $t\in J=[0, a]$. Let $X$ and $Y$ be two Banach spaces such that $Y$ is
densely and continuously embedded in $X$, the unknown function $u(\cdot)$ takes its values in $X$ and $u_{0}\in X$.
We consider the operators $L: D(L)\subset X\rightarrow Y$, $E: D(E)\subset X\rightarrow Y$ and
$M: D(M)\subset X\rightarrow X$, $W(t)=(B_{1}(t)u(t),\ldots,B_{r}(t)u(t))$, such that
$\lbrace B_{i}(t):i=1,\ldots,r,~t\in J\rbrace$ is a family of linear closed operators defined on dense sets
$S_{1},\ldots,S_{r}$ in $X$ with values in $Y$. It is also assumed that $f: J\times X^{r}\rightarrow Y$
and $h: C(J: X)\rightarrow X$ are given abstract functions satisfying some conditions to be specified later.
In Section~\ref{sec:2} we present some essential notions and facts
that will be used in the proof of our results, such as, fractional operators, fractional powers
of the generator of an analytic compact semigroup, and the form of mild solutions
of \eqref{eq:1.1}--\eqref{eq:1.2}. In Section~\ref{sec:3}, we prove existence (Theorem~\ref{Theorem 3.1})
and uniqueness (Theorem~\ref{Theorem 3.2}) of mild solutions to system \eqref{eq:1.1}--\eqref{eq:1.2}.
Then, in Section~\ref{sec:4}, we prove existence of optimal pairs for the \eqref{LP} Lagrange optimal control problem
(Theorem~\ref{Theorem 4.2}). We end with Section~\ref{sec:5}, where an example illustrating the application
of the abstract results (Theorems~\ref{Theorem 3.1}, \ref{Theorem 3.2} and \ref{Theorem 4.2}) is given.


\section{Preliminaries}
\label{sec:2}

In this section we introduce some basic definitions, notations and lemmas,
which will be used throughout the work. In particular, we give main properties of fractional calculus
\cite{AMA.3,AMA.4} and well known facts in semigroup theory \cite{AMA.32,AMA.33,AMA.34}.

\begin{definition}
\label{Definition 2.1}
The fractional integral of order $\alpha>0$ of a function $f\in L^{1}([a,b],\mathbb{R}^{+})$ is given by
$$
I^{\alpha}_{a}f(t)=\frac{1}{\Gamma(\alpha)}\int_{a}^{t}(t-s)^{\alpha-1}f(s)ds,
$$
where $\Gamma$ is the classical gamma function. If $a=0$, we can write $I^{\alpha}f(t)=(g_{\alpha}*f)(t)$, where
$$
g_{\alpha}(t):=\left\{
\begin{array}{ll}
\frac{1}{\Gamma(\alpha)}t^{\alpha-1},& \mbox{$t>0$},\\
0, & \mbox{$t\leq 0$},
\end{array}\right.
$$
and, as usual, $*$ denotes convolution of functions. Moreover,
$\lim\limits_{\alpha\rightarrow 0}g_{\alpha}(t)=\delta(t)$,
with $\delta$ the delta Dirac function.
\end{definition}

\begin{definition}
\label{Definition 2.2}
The Riemann--Liouville fractional derivative of order $\alpha>0$,
$n-1<\alpha<n$, $n\in \mathbb{N}$, is given by
$$
^{L}D^{\alpha}f(t)=\frac{1}{\Gamma(n-\alpha)}\frac{d^{n}}{dt^{n}}
\int_{0}^{t}\frac{f(s)}{(t-s)^{\alpha+1-n}}ds,\quad t>0,
$$
where function $f$ has absolutely continuous derivatives up to order $n-1$.
\end{definition}

\begin{definition}
\label{Definition 2.3}
The Caputo fractional derivative of order $\alpha>0$,
$n-1<\alpha<n$, $n\in \mathbb{N}$, is given by
$$
^{C}D^{\alpha}f(t) = {^{L}D}^{\alpha}\left(f(t)
-\sum\limits_{k=0}^{n-1}\frac{t^{k}}{k!}f^{(k)}(0)\right),
\quad t>0,
$$
where function $f$ has absolutely continuous derivatives up to order $n-1$.
\end{definition}

\begin{remark}
\label{Remark 2.1}
Let $n-1<\alpha<n$, $n\in \mathbb{N}$. The following properties hold:
\begin{itemize}
\item[(i)] If $f\in C^{n}([0, \infty))$, then
$$
^{C}D^{\alpha}f(t)=\frac{1}{\Gamma(n-\alpha)}
\int_{0}^{t}\frac{f^{(n)}(s)}{(t-s)^{\alpha+1-n}}ds=I^{n-\alpha}f^{(n)}(t),
\quad t>0.
$$
\item[(ii)] The Caputo derivative of a constant function is equal to zero.
\item[(iii)] The Riemann--Liouville derivative of a constant function $C$ is given by
$$
^{L}D^{\alpha}_{a^{+}}C=\frac{C}{\Gamma(1-\alpha)}(x-a)^{-\alpha}.
$$
\end{itemize}
If $f$ is an abstract function with values in $X$, then the integrals
which appear in Definitions~\ref{Definition 2.1}--\ref{Definition 2.3}
are taken in Bochner's sense.
\end{remark}

Let $(X, \Vert\cdot\Vert)$ be a Banach space, $C(J, X)$ denotes the Banach space
of continuous functions from $J$ into $X$ with the norm
$\Vert u\Vert_{J}=\sup \lbrace\Vert u(t)\Vert: t\in J\rbrace$,
and let $\mathcal{L}(X)$ be the Banach space of bounded linear operators
from $X$ to $X$ with the norm
$\Vert G\Vert_{\mathcal{L}(X)}=\sup\lbrace\Vert G(u)\Vert: \Vert u\Vert=1\rbrace$.
We make the following assumptions:
\begin{itemize}
\item[($H_1$)] $E: D(E)\subset X\rightarrow Y$ is linear, closed, and
$L: D(L)\subset X\rightarrow Y$ and $M: D(M)\subset X\rightarrow X$ are linear operators.
\item[($H_2$)] $D(M)\subset D(L)\subset D(E)$ and $L$ and $M$ are bijective.
\item[($H_3$)] $L^{-1}: Y\rightarrow D(L)\subset X$ and
$M^{-1}: X\rightarrow D(M)\subset X$ are linear, bounded, and compact operators.
\end{itemize}
Note that ($H_3$) implies $L$ to be closed. Indeed, if $L^{-1}$ is closed and injective, then
its inverse is also closed. From ($H_1$)--($H_3$) and the closed graph theorem,
we obtain the boundedness of the linear operator $EL^{-1}: Y\rightarrow Y$. Consequently,
$EL^{-1}$ generates a semigroup $\lbrace Q(t), t\geq0\rbrace,~ Q(t):=e^{EL^{-1}t}$.
We suppose that $M_{0}:=\sup_{t\geq0}\Vert Q(t)\Vert<\infty$ and, for short,
we denote $C_{1}=\Vert L^{-1}\Vert$ and $C_{2}=\Vert M^{-1}\Vert$.

According to previous definitions, it is suitable to rewrite problem
\eqref{eq:1.1}--\eqref{eq:1.2} as the equivalent integral equation
\begin{equation}
\label{eq:2.1}
Lu(t)=Lu(0)+\frac{1}{\Gamma(\alpha)}\int_{0}^{t}(t-s)^{\alpha-1}[Eu(s)+f(s, W(s))]ds,
\end{equation}
provided the integral in \eqref{eq:2.1} exists for a.a. $t\in J$.

\begin{remark}
\label{Remark 2.2}
Note that:
\begin{itemize}
\item[(i)] For the nonlocal condition, the function $u(0)$ is dependent on $t$.
\item[(ii)] $^{L}D^{1-\alpha}_{t}[Mu(0)]$ is well defined, i.e., if $\alpha=1$
and $M$ is the identity, then \eqref{eq:1.2} reduces to the usual nonlocal condition.
\item[(iii)] Function $u(0)$ takes the form
$$
M^{-1}v_{0}+\frac{1}{\Gamma(1-\alpha)}\int_{0}^{t}\frac{M^{-1}[u_{0}+h(u(s))]}{(t-s)^{\alpha}}ds,
$$
where $Mu(0)|_{t=0}=v_{0}$.
\item[(iv)] The explicit and implicit integrals given in \eqref{eq:2.1} exist (taken in Bochner's sense).
\end{itemize}
\end{remark}

Throughout the paper, $A=EL^{-1}: D(A)\subset Y\rightarrow Y$
will be the infinitesimal generator of a compact analytic
semigroup of uniformly bounded linear operators $Q(\cdot)$. Then, there exists a constant
$M_{0}\geq1$ such that $\Vert Q(t)\Vert\leq M_{0}$ for $t\geq0$. Without loss of generality,
we assume that $0\in \rho(A)$, the resolvent set of $A$. Then it is possible to define
the fractional power $A^{q}$, $0 < q \leq 1$, as a closed linear operator on its domain $D(A^{q})$
with inverse $A^{-q}$. Furthermore, the subspace $D(A^{q})$ is dense in $X$ and the expression
$\Vert u\Vert_{q}=\Vert A^{q}u\Vert, u\in D(A^{q})$ defines a norm on $D(A^{q})$.
Hereafter, we denote by $X_{q}$ the Banach space $D(A^{q})$ normed with $\Vert u\Vert_{q}$.

\begin{lemma}[See \cite{AMA.33}]
\label{Lemma 2.1}
Let $A$ be the infinitesimal generator of an analytic semigroup $Q(t)$.
If $0\in \rho(A)$, then
\begin{itemize}
\item[(a)] $Q(t): X\rightarrow D(A^{q})$ for every $t>0$ and $q\geq0$.
\item[(b)] For every $u\in D(A^{q})$, we have $Q(t)A^{q}u=A^{q}Q(t)u$.
\item[(c)] For every $t>0$, the operator $A^{q}Q(t)$ is bounded and
$\Vert A^{q}Q(t)\Vert\leq M_{q}t^{-q}e^{-\omega t}$.
\item[(d)] If $0<q\leq1$ and $u\in D(A^{q})$, then
$\Vert Q(t)u-u\Vert\leq C_{q}t^{q}\Vert A^{q}u\Vert$.
\end{itemize}
\end{lemma}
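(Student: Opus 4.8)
The four statements are classical facts about analytic semigroups with $0\in\rho(A)$, so the plan is to recall the standard integral representation of the fractional powers and then verify each item in turn. The starting point is the Balakrishnan-type formula
\begin{equation*}
A^{-q}=\frac{1}{\Gamma(q)}\int_{0}^{\infty}t^{q-1}Q(t)\,dt,
\end{equation*}
which converges in $\mathcal{L}(X)$ precisely because $\|Q(t)\|\le M_{0}e^{-\omega t}$ for some $\omega>0$ once $0\in\rho(A)$, and defines $A^{-q}$ as a bounded injective operator; $A^{q}$ is then its (closed, densely defined) inverse, with $D(A^{q})=\operatorname{Ran}(A^{-q})$ dense in $X$. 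For part (a), I would use that the analytic semigroup satisfies $Q(t):X\to D(A)\subset D(A^{q})$ for every $t>0$, iterating $Q(t)=Q(t/n)^{n}$ and the smoothing estimate $\|AQ(s)\|\le M_{1}s^{-1}$ to land in $D(A^{k})\subset D(A^{q})$ for any integer $k\ge q$; alternatively one invokes that $Q(t)$ maps $X$ into $\bigcap_{k\ge 1}D(A^{k})$.

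For part (b), the commutation $A^{q}Q(t)u=Q(t)A^{q}u$ on $D(A^{q})$ follows from the corresponding commutation of $Q(t)$ with $A^{-q}$ — which is immediate from the integral formula above since $Q(t)Q(s)=Q(s)Q(t)$ — together with the fact that $A^{q}$ and $Q(t)$ are inverses/compositions of operators that commute with $A^{-q}$. For part (c), the key estimate $\|A^{q}Q(t)\|\le M_{q}t^{-q}e^{-\omega t}$ comes from writing $A^{q}Q(t)=A^{q}Q(t/2)\cdot Q(t/2)$, using the contour-integral representation of $A^{q}Q(t/2)$ (or the known bound $\|A^{q}Q(s)\|\le C s^{-q}$ for $s$ in a bounded interval, obtained from the Cauchy integral formula for the analytic function $z\mapsto z^{q}e^{zs}$ over a sectorial path), and absorbing the exponential decay from the second factor $\|Q(t/2)\|\le M_{0}e^{-\omega t/2}$. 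Finally, for part (d), with $u\in D(A^{q})$, $0<q\le 1$, I would write
\begin{equation*}
Q(t)u-u=\int_{0}^{t}\frac{d}{ds}Q(s)u\,ds=\int_{0}^{t}AQ(s)u\,ds=\int_{0}^{t}A^{1-q}Q(s)\,A^{q}u\,ds,
\end{equation*}
then estimate using part (c) with exponent $1-q$: $\|Q(t)u-u\|\le\int_{0}^{t}M_{1-q}s^{-(1-q)}ds\,\|A^{q}u\|=\frac{M_{1-q}}{q}t^{q}\|A^{q}u\|$, which gives the claim with $C_{q}=M_{1-q}/q$ (the integral converges since $1-q<1$).

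The only genuinely delicate point is part (c): obtaining the uniform bound $\|A^{q}Q(t)\|\le M_{q}t^{-q}$ for small $t$ requires the sectorial (analytic) structure, not merely strong continuity, and one must be careful that the contour estimate for $\|A^{q}Q(t)\|$ is uniform in $t$ on $(0,1]$ before splitting off the exponential factor for large $t$. Since all four items are quoted verbatim from the reference \cite{AMA.33}, I would in the write-up simply cite it for (a)–(d) and, if a self-contained argument is wanted, include the short derivation of (d) from (c) as above, that being the one implication used repeatedly in the sequel.
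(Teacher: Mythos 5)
The paper gives no proof of this lemma---it is quoted directly from the cited reference (Pazy's book)---and your reconstruction is exactly the standard argument found there: the Balakrishnan integral for $A^{-q}$, the smoothing property $Q(t):X\to\bigcap_{k}D(A^{k})$ for (a), commutation with $A^{-q}$ for (b), the contour/integral estimate for (c), and the identity $Q(t)u-u=\int_{0}^{t}A^{1-q}Q(s)A^{q}u\,ds$ combined with (c) for (d). The proposal is correct and takes essentially the same (indeed, the canonical) approach.
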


\begin{remark}
\label{Remark 2.3}
Note that:
\begin{itemize}
\item[(i)] $D(A^{q})$ is a Banach space with the norm
$\Vert u\Vert_{q}=\Vert A^{q}u\Vert$ for $u\in D(A^{q})$.
\item[(ii)] If $0<p\leq q\leq1$, then $D(A^{q})\hookrightarrow D(A^{p})$.
\item[(iii)] $A^{-q}$ is a bounded linear operator in $X$ with $D(A^{q})=Im(A^{-q})$.
\end{itemize}
\end{remark}

\begin{remark}
\label{Remark 2.4}
Observe, as in \cite{AMA.35}, that by Lemma~\ref{Lemma 2.1} (a) and (b),
the restriction $Q_{q}(t)$ of $Q(t)$ to $X_{q}$ is exactly the part of $Q(t)$
in $X_{q}$. Let $u\in X_{q}$. Since $\Vert Q(t)u\Vert_{q}\leq\Vert A^{q}Q(t)u\Vert
=\Vert Q(t)A^{q}u\Vert\leq\Vert Q(t)\Vert\Vert A^{q}u\Vert=\Vert Q(t)\Vert\Vert u\Vert_{q}$,
and as $t$ decreases to $0^{+}$, $\Vert Q(t)u-u\Vert_{q}
=\Vert A^{q}Q(t)u-A^{q}u\Vert=\Vert Q(t)A^{q}u-A^{q}u\Vert\rightarrow0$
for all $u\in X_{q}$, it follows that $\lbrace Q(t), t\geq0\rbrace$ is a family of strongly
continuous semigroups on $X_{q}$ and $\Vert Q_{q}(t)\Vert\leq \Vert Q(t)\Vert\leq M_{0}$ for all $t\geq 0$.
\end{remark}

In the sequel, we will also use $\Vert \phi\Vert_{L^{p}(J, \mathbb{R}^{+})}$
to denote the $L^{p}(J, \mathbb{R}^{+})$ norm of $\phi$ whenever
$\phi\in L^{p}(J, \mathbb{R}^{+})$ for some $p$ with $1<p<\infty$.
We will set $q\in (0, 1)$ and denote by $\Omega_{q}$ the Banach space $C(J, X_{q})$
endowed with supnorm given by $\Vert u\Vert_{\infty}=\sup_{t\in J}\Vert u\Vert_{q}$
for $u\in \Omega_{q}$.

Motivated by \cite{AMA.21,AMA.30,AMA.36}, we give the definition
of mild solution to \eqref{eq:1.1}--\eqref{eq:1.2}.

\begin{definition}
\label{Definition 2.4}
A function $u\in \Omega_{q}$ is called a mild solution of system
\eqref{eq:1.1}--\eqref{eq:1.2} if it satisfies the following integral equation:
\begin{equation*}
u(t)=S_{\alpha}(t)LM^{-1}\left[v_{0}+\frac{1}{\Gamma(1-\alpha)}
\int_{0}^{t}\frac{[u_{0}+h(u(s))]}{(t-s)^{\alpha}}ds\right]
+\int_{0}^{t}(t-s)^{\alpha-1}T_{\alpha}(t-s)f(s, W(s))ds,
\end{equation*}
where
$$
S_{\alpha}(t)=\int_{0}^{\infty}L^{-1}\zeta_{\alpha}(\theta)Q(t^{\alpha}\theta)d\theta,
\quad T_{\alpha}(t)=\alpha\int_{0}^{\infty}L^{-1}\theta\zeta_{\alpha}(\theta)Q(t^{\alpha}\theta)d\theta,
$$
$$
\zeta_{\alpha}(\theta)=\frac{1}{\alpha}\theta^{-1-\frac{1}{\alpha}}\varpi_{\alpha}(\theta^{-\frac{1}{\alpha}})\geq0,
\quad \varpi_{\alpha}(\theta)=\frac{1}{\pi}\sum_{n=1}^{\infty}(-1)^{n-1}\theta^{-\alpha n-1}
\frac{\Gamma(n\alpha+1)}{n!}\sin (n\pi\alpha), \theta\in (0, \infty),
$$
with $\zeta_{\alpha}$ the probability density function defined on $(0, \infty)$, that is,
$\zeta_{\alpha}(\theta)\geq0, \theta\in(0, \infty)$
and $\int_{0}^{\infty}\zeta_{\alpha}(\theta)d\theta=1$.
\end{definition}

\begin{remark}
\label{Remark 2.5}
For $v\in [0, 1]$, ones has
$$
\int_{0}^{\infty}\theta^{v}\zeta_{\alpha}(\theta)d\theta
=\int_{0}^{\infty}\theta^{-\alpha v}\varpi_{\alpha}(\theta)d\theta
=\frac{\Gamma(1+v)}{\Gamma(1+\alpha v)}
$$
(see \cite{AMA.37}).
\end{remark}

\begin{lemma}[See \cite{AMA.30,AMA.36,AMA.37}]
\label{Lemma 2.2}
The operators $S_{\alpha}(t)$ and $T_{\alpha}(t)$ have the following properties:
\begin{itemize}
\item[(a)] For any fixed $t\geq0$, the operators $S_{\alpha}(t)$ and $T_{\alpha}(t)$
are linear and bounded, i.e., for any $u\in X$, $\Vert S_{\alpha}(t)u\Vert\leq C_{1}M_{0}\Vert u\Vert$
and $\Vert T_{\alpha}(t)u\Vert\leq \frac{C_{1}M_{0}}{\Gamma (\alpha)}\Vert u\Vert$.
\item[(b)] $\lbrace S_{\alpha}(t), t\geq0\rbrace$ and $\lbrace T_{\alpha}(t)$,
$t\geq0\rbrace$ are strongly continuous, i.e., for $u\in X$ and $0\leq t_{1}<t_{2}\leq a$, we have
$\Vert S_{\alpha}(t_{2})u-S_{\alpha}(t_{1})u\Vert\rightarrow 0$
and $\Vert T_{\alpha}(t_{2})u-T_{\alpha}(t_{1})u\Vert\rightarrow 0$ as $t_{1}\rightarrow t_{2}$.
\item[(c)] For every $t>0$, $S_{\alpha}(t)$ and $T_{\alpha}(t)$ are compact operators.
\item[(d)] For any $u\in X$, $p\in (0, 1)$ and $q\in (0, 1)$, we have
$AT_{\alpha}(t)u=A^{1-p}T_{\alpha}(t)A^{p}u$, $t\in J$, and $\Vert A^{q}T_{\alpha}(t)\Vert
\leq \frac{\alpha C_{1}M_{q}\Gamma(2-q)}{\Gamma(1+\alpha(1-q))}t^{-q\alpha}$, $0<t\leq a$.
\item[(e)] For fixed $t\geq0$ and any $u\in X_{q}$, we have
$\Vert S_{\alpha}(t)u\Vert_{q}\leq C_{1}M_{0}\Vert u\Vert_{q}$
and $\Vert T_{\alpha}(t)u\Vert_{q}\leq \frac{C_{1}M_{0}}{\Gamma (\alpha)}\Vert u\Vert_{q}$.
\item[(f)] $S_{\alpha}(t)$ and $T_{\alpha}(t)$, $t>0$, are uniformly continuous, that is,
for each fixed $t>0$ and $\epsilon>0$ there exists $g>0$ such that
$\Vert S_{\alpha}(t+\epsilon)-S_{\alpha}(t)\Vert_{q}<\epsilon$
for $t+\epsilon\geq0$ and $\vert \epsilon\vert<g$,
$\Vert T_{\alpha}(t+\epsilon)-T_{\alpha}(t)\Vert_{q}<\epsilon$
for $t+\epsilon\geq0$ and $\vert \epsilon\vert<g$.
\end{itemize}
\end{lemma}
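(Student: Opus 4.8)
The plan is to read off all six items from the integral representations of $S_\alpha(t)$ and $T_\alpha(t)$, transferring the corresponding facts about the semigroup $Q(\cdot)$ and its fractional powers (Lemma~\ref{Lemma 2.1}, Remark~\ref{Remark 2.4}) through those integrals, while the moment identities $\int_0^\infty\theta^{v}\zeta_\alpha(\theta)\,d\theta=\Gamma(1+v)/\Gamma(1+\alpha v)$ of Remark~\ref{Remark 2.5} supply every explicit constant. For (a), linearity is inherited from that of $L^{-1}$, $Q(t)$ and the Bochner integral, and for $u\in X$ one estimates $\|S_\alpha(t)u\|\le C_1M_0\|u\|\int_0^\infty\zeta_\alpha(\theta)\,d\theta=C_1M_0\|u\|$, while, using Remark~\ref{Remark 2.5} with $v=1$, $\|T_\alpha(t)u\|\le\alpha C_1M_0\|u\|\int_0^\infty\theta\zeta_\alpha(\theta)\,d\theta=\alpha C_1M_0\|u\|\,\Gamma(2)/\Gamma(1+\alpha)=(C_1M_0/\Gamma(\alpha))\|u\|$. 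Item (e) follows by the same computation after inserting $A^q$ and commuting it with $Q(t^\alpha\theta)$ via Lemma~\ref{Lemma 2.1}(b), with $\|u\|$ replaced by $\|A^qu\|=\|u\|_q$, Remark~\ref{Remark 2.4} ensuring $Q$ acts boundedly on $X_q$.

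For (b) I would write $S_\alpha(t_2)u-S_\alpha(t_1)u=\int_0^\infty L^{-1}\zeta_\alpha(\theta)\big[Q(t_2^\alpha\theta)-Q(t_1^\alpha\theta)\big]u\,d\theta$; the integrand tends to $0$ pointwise in $\theta$ as $t_1\to t_2$ by strong continuity of $Q(\cdot)$ and is dominated by the integrable majorant $2C_1M_0\|u\|\,\zeta_\alpha(\theta)$, so the dominated convergence theorem gives the conclusion, and similarly for $T_\alpha$ with majorant $2\alpha C_1M_0\|u\|\,\theta\zeta_\alpha(\theta)$. For (c), fix $t>0$; since $L^{-1}$ is compact (assumption ($H_3$)) and $Q(\cdot)$ is strongly continuous, the map $\theta\mapsto L^{-1}Q(t^\alpha\theta)$ is norm-continuous and takes values in the closed ideal of compact operators, hence the truncated integral $\int_0^{R}\zeta_\alpha(\theta)L^{-1}Q(t^\alpha\theta)\,d\theta$ is compact for every $R$; since $\|S_\alpha(t)-\int_0^{R}\zeta_\alpha(\theta)L^{-1}Q(t^\alpha\theta)\,d\theta\|\le C_1M_0\int_{R}^\infty\zeta_\alpha(\theta)\,d\theta\to0$ as $R\to\infty$, $S_\alpha(t)$ is a uniform limit of compact operators, hence compact; the same argument applies to $T_\alpha(t)$ using $\int_R^\infty\theta\zeta_\alpha(\theta)\,d\theta\to0$. (Alternatively, for $\delta\in(0,1)$ one may factor $Q(t^\alpha\theta)=Q(t^\alpha\delta)Q(t^\alpha(\theta-\delta))$ on $[\delta,\infty)$ so that the compact operator $Q(t^\alpha\delta)$ coming from the compact semigroup factors out of the truncated integral.)

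Item (d) is obtained by writing $A=A^{1-p}A^p$, moving $A^p$ inside the integral past $Q(t^\alpha\theta)$ using Lemma~\ref{Lemma 2.1}(b), and pulling the closed operator $A^{1-p}$ back out of the Bochner integral; for the norm bound, Lemma~\ref{Lemma 2.1}(c) and $e^{-\omega t^\alpha\theta}\le1$ give $\|A^qT_\alpha(t)u\|\le\alpha C_1M_q\,t^{-q\alpha}\|u\|\int_0^\infty\theta^{1-q}\zeta_\alpha(\theta)\,d\theta$, which equals $\alpha C_1M_q\,\Gamma(2-q)\,t^{-q\alpha}\|u\|/\Gamma(1+\alpha(1-q))$ by Remark~\ref{Remark 2.5} with $v=1-q\in(0,1)$.

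The genuinely delicate item is (f), which demands continuity in the uniform operator topology on $X_q$, not merely strong continuity, and here I would exploit the analyticity of $Q(\cdot)$. After replacing $u$ by $A^qu$ to pass from the $X_q$ norm to the $X$ norm, the basic estimate is $\|Q(s+\eta)-Q(s)\|\le\int_s^{s+\eta}\|AQ(\sigma)\|\,d\sigma\le M_1\ln(1+\eta/s)$ for $s>0$. I would then split the $\theta$-integral of $S_\alpha(t+\epsilon)-S_\alpha(t)$ over $(0,\delta)$, $[\delta,R]$ and $[R,\infty)$: the first and last pieces are bounded by $2C_1M_0\big(\int_0^\delta+\int_R^\infty\big)\zeta_\alpha(\theta)\,d\theta$, made small uniformly in $\epsilon$ by choosing $\delta$ small and $R$ large; on the middle range, for $t$ in a fixed compact subinterval of $(0,a]$ and $|\epsilon|$ small one has $\eta=((t+\epsilon)^\alpha-t^\alpha)\theta$ with $\eta/(t^\alpha\theta)=((t+\epsilon)^\alpha-t^\alpha)/t^\alpha\le c|\epsilon|$ \emph{independent of} $\theta$, so that piece is at most $C_1M_1\ln(1+c|\epsilon|)\to0$ as $\epsilon\to0$. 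The same three-region decomposition handles $T_\alpha$ with the weight $\theta\zeta_\alpha(\theta)$. Thus the main obstacle is organizing this estimate for (f) and, throughout, tracking the unbounded operators $A^q$ and the embedded $L^{-1}$ through the domains on which the manipulations are licit; items (a)--(e) are routine once the moment identities of Remark~\ref{Remark 2.5} and the estimates of Lemma~\ref{Lemma 2.1} are at hand.
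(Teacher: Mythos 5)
The paper gives no proof of this lemma: it is imported wholesale from the cited references \cite{AMA.30,AMA.36,AMA.37}, so there is no in-text argument to compare against line by line. What you have written is, in substance, the standard proof from those sources. The moment identity of Remark~\ref{Remark 2.5} gives (a), (d) and (e) with exactly the stated constants (using $\alpha\Gamma(2)/\Gamma(1+\alpha)=1/\Gamma(\alpha)$ and $\int_0^\infty\theta^{1-q}\zeta_{\alpha}(\theta)\,d\theta=\Gamma(2-q)/\Gamma(1+\alpha(1-q))$), dominated convergence gives (b), and your treatment of (f) via $\Vert AQ(\sigma)\Vert\leq M_{1}\sigma^{-1}$ together with the observation that the ratio $((t+\epsilon)^{\alpha}-t^{\alpha})/t^{\alpha}$ is independent of $\theta$ is precisely the mechanism that upgrades strong to uniform continuity for $t>0$; in fact, since that ratio does not depend on $\theta$ and $\int_0^\infty\theta\zeta_\alpha(\theta)\,d\theta<\infty$, the truncation to $[\delta,R]$ is not even needed in that step.

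The one step I would not accept as written is in (c): left-multiplying a strongly continuous, uniformly bounded family by a compact operator does not in general yield a norm-continuous family in a Banach space (the standard implication goes the other way round: $T_nK\to TK$ in norm when $T_n\to T$ strongly and $K$ is compact), so the assertion that $\theta\mapsto L^{-1}Q(t^{\alpha}\theta)$ is norm-continuous needs a different justification. Two correct ones are available, and you mention one of them: either invoke the fact that a compact semigroup is norm-continuous for $t>0$, so $\theta\mapsto Q(t^{\alpha}\theta)$ is itself norm-continuous away from $\theta=0$ and the truncated integral is a norm limit of finite sums of compact operators; or use your parenthetical factorization $Q(t^{\alpha}\theta)=Q(t^{\alpha}\delta)Q(t^{\alpha}(\theta-\delta))$ on $[\delta,\infty)$ and pull the compact factor $Q(t^{\alpha}\delta)$ outside the (strongly defined) integral --- this is the route taken in \cite{AMA.36,AMA.37} and mirrored in the paper's own proof of Lemma~\ref{Lemma 3.3}. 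Finally, note that both the statement and your argument slide over the fact that $A=EL^{-1}$ acts on $Y$ while $S_{\alpha}(t)$ and $T_{\alpha}(t)$ carry the extra factor $L^{-1}$ mapping into $X$, so expressions such as $A^{q}T_{\alpha}(t)$ presuppose a commutation of $A^{q}$ past $L^{-1}$ that the paper never justifies; this imprecision is inherited from the source rather than introduced by you, but it is worth flagging if you intend the proof to be self-contained.
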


\begin{lemma}[See \cite{AMA.38}]
\label{Lemma 2.3}
For each $\psi\in L^{p}(J, X)$ with $1\leq p<\infty$,
$$
\lim\limits_{g\rightarrow0}\int_{0}^{a}\Vert \psi(t+g)-\psi(t)\Vert^{p}dt=0,
$$
where $\psi(s)=0$ for $s\notin J$.
\end{lemma}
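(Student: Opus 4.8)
The plan is to reduce the statement to the classical continuity of translation in $L^p$ and to prove that in three moves. First I would extend $\psi$ by zero to all of $\mathbb R$, so that $\psi\in L^p(\mathbb R,X)$ with $\|\psi\|_{L^p(\mathbb R,X)}=\|\psi\|_{L^p(J,X)}$. Writing $(\tau_g\psi)(t):=\psi(t+g)$ and using that the integrand is nonnegative, we get
$$\int_0^a\|\psi(t+g)-\psi(t)\|^p\,dt\le\int_{\mathbb R}\|\psi(t+g)-\psi(t)\|^p\,dt=\|\tau_g\psi-\psi\|_{L^p(\mathbb R,X)}^p,$$
so it suffices to show $\|\tau_g\psi-\psi\|_{L^p(\mathbb R,X)}\to0$ as $g\to0$.

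Next I would verify the claim on the dense subclass $D$ of finite sums $\sum_{k=1}^n x_k\chi_{(a_k,b_k)}$ with $x_k\in X$ and $(a_k,b_k)$ bounded intervals. By the triangle inequality in $L^p$ it is enough to treat a single term $x\chi_{(a,b)}$, and since
$$\big\|x\chi_{(a,b)}(t+g)-x\chi_{(a,b)}(t)\big\|=\|x\|\,\big|\chi_{(a,b)}(t+g)-\chi_{(a,b)}(t)\big|,$$
the function $\tau_g(x\chi_{(a,b)})-x\chi_{(a,b)}$ is supported on a set of Lebesgue measure at most $2|g|$ and is bounded there by $\|x\|$; hence its $L^p$ norm is at most $\|x\|\,(2|g|)^{1/p}\to0$. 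Thus $\|\tau_g\varphi-\varphi\|_{L^p(\mathbb R,X)}\to0$ for every $\varphi\in D$.

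Then I would pass to a general $\psi$ by an $\varepsilon/3$ argument. The key inputs are that $D$ is dense in $L^p(\mathbb R,X)$ and that $\tau_g$ is an isometry of $L^p(\mathbb R,X)$ (translation invariance of Lebesgue measure). Given $\varepsilon>0$, pick $\varphi\in D$ with $\|\psi-\varphi\|_{L^p(\mathbb R,X)}<\varepsilon$; then
$$\|\tau_g\psi-\psi\|_{L^p(\mathbb R,X)}\le\|\tau_g\psi-\tau_g\varphi\|_{L^p(\mathbb R,X)}+\|\tau_g\varphi-\varphi\|_{L^p(\mathbb R,X)}+\|\varphi-\psi\|_{L^p(\mathbb R,X)}<2\varepsilon+\|\tau_g\varphi-\varphi\|_{L^p(\mathbb R,X)},$$
and by the previous step the last term is below $\varepsilon$ once $|g|$ is small enough. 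Hence $\limsup_{g\to0}\|\tau_g\psi-\psi\|_{L^p(\mathbb R,X)}\le3\varepsilon$, and since $\varepsilon$ is arbitrary the limit is $0$, which gives the assertion together with the first displayed inequality.

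The \emph{main obstacle} is establishing the density of $D$ in $L^p(\mathbb R,X)$, since this is where the vector-valued (Bochner) machinery enters: simple functions $\sum x_k\chi_{E_k}$ with $|E_k|<\infty$ are dense in $L^p(\mathbb R,X)$ by the construction of the Bochner integral, each $\chi_E$ with $|E|<\infty$ is approximated in $L^p$ by $\chi_U$ with $U\supset E$ open (outer regularity of Lebesgue measure, since $\|\chi_E-\chi_U\|_{L^p}^p=|U\setminus E|$), and a finite union of the component intervals of the open set $U$ then approximates $\chi_U$ in $L^p$. Everything else is elementary measure theory together with the triangle inequality.
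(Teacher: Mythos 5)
Your proof is correct. Note that the paper itself gives no proof of Lemma~2.3 --- it is stated with a citation to Zeidler's book \cite{AMA.38} --- so there is no in-paper argument to compare against; what you have written is precisely the standard proof of continuity of translation in $L^{p}$, carried over verbatim to the Bochner-valued setting. All the ingredients check out: the zero extension is licensed by the hypothesis $\psi(s)=0$ for $s\notin J$; the reduction to $\|\tau_{g}\psi-\psi\|_{L^{p}(\mathbb{R},X)}$ is immediate from nonnegativity of the integrand; the estimate $\|x\|\,(2|g|)^{1/p}$ on a single term $x\chi_{(a,b)}$ is right, since the symmetric difference of $(a,b)$ and its translate has measure at most $2|g|$; the translation operators are isometries by translation invariance of Lebesgue measure, so the $\varepsilon/3$ argument closes; and the density of $X$-valued step functions over intervals follows, as you say, from density of simple functions in the Bochner space together with outer regularity. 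The only point worth flagging is that you correctly identified the density of the class $D$ as the place where the vector-valued theory actually enters --- everything else is scalar measure theory applied to $t\mapsto\|\psi(t+g)-\psi(t)\|$.
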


\begin{lemma}[See \cite{AMA.37}]
\label{Lemma 2.4}
A measurable function $G: J\rightarrow X$ is a Bochner integral
if $\Vert G\Vert$ is Lebesgue integrable.
\end{lemma}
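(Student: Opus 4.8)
The plan is to prove this classical Bochner integrability criterion by reducing, through a truncation device, to the scalar Lebesgue dominated convergence theorem. Recall that $G\colon J\to X$ being measurable means there is a sequence of simple (finitely-valued, measurably supported) functions $G_n$ with $G_n(t)\to G(t)$ for almost every $t\in J$; under strong measurability this is part of the definition, and in the merely weakly measurable setting it follows from the Pettis measurability theorem, since $G$ then has essentially separable range. Recall also that $G$ is \emph{Bochner integrable} precisely when one may choose such $G_n$ with $\int_J\|G_n(t)-G(t)\|\,dt\to 0$, in which case $\int_J G(t)\,dt:=\lim_n\int_J G_n(t)\,dt$ is well defined because $X$ is complete.

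First I would modify the approximating sequence so that it becomes dominated by $\|G\|$, which is the whole point. Starting from simple functions $G_n\to G$ a.e., put
$$
\widetilde{G}_n(t)=G_n(t)\,\mathbf{1}_{E_n}(t),\qquad
E_n:=\{\,t\in J:\|G_n(t)\|\le 2\|G(t)\|\,\}.
$$
Each $\widetilde{G}_n$ is again simple, and $\|\widetilde{G}_n(t)\|\le 2\|G(t)\|$ for every $t$. Moreover $\widetilde{G}_n(t)\to G(t)$ for a.e.\ $t$: where $\|G(t)\|=0$ the constraint forces $\widetilde{G}_n(t)=0=G(t)$, while where $\|G(t)\|>0$ and $G_n(t)\to G(t)$ one has $\|G_n(t)\|<2\|G(t)\|$ for all large $n$, so $\widetilde{G}_n(t)=G_n(t)\to G(t)$.

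Next I would pass to the limit. From $\|\widetilde{G}_n(t)-G(t)\|\le\|\widetilde{G}_n(t)\|+\|G(t)\|\le 3\|G(t)\|$, the hypothesis that $\|G\|$ is Lebesgue integrable on $J$, and $\|\widetilde{G}_n(t)-G(t)\|\to 0$ a.e., the scalar dominated convergence theorem yields $\int_J\|\widetilde{G}_n(t)-G(t)\|\,dt\to 0$. Thus $(\widetilde{G}_n)$ exhibits $G$ as Bochner integrable; in particular $\|\int_J\widetilde{G}_n-\int_J\widetilde{G}_m\|\le\int_J\|\widetilde{G}_n-G\|+\int_J\|G-\widetilde{G}_m\|\to 0$, so $(\int_J\widetilde{G}_n)$ is Cauchy in $X$ and its limit is $\int_J G$. (The reverse implication, not required here, is immediate: for any simple approximants $G_n$ one has $\int_J\|G\|\le\int_J\|G-G_n\|+\int_J\|G_n\|<\infty$.)

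The only genuinely delicate point is the passage from ``measurable'' to ``pointwise a.e.\ limit of simple functions'': this is automatic if measurability is understood in the strong sense, and is otherwise supplied by Pettis's theorem via essential separability of the range. Once that is in hand, the remainder is exactly the truncation-plus-dominated-convergence argument above, and I expect no further obstacle.
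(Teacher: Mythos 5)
Your proof is correct. The paper does not prove Lemma~\ref{Lemma 2.4} at all---it is quoted from the literature (the citation to Zhou and Jiao)---so there is nothing to compare against; your truncation-plus-dominated-convergence argument is exactly the classical proof of Bochner's integrability criterion found in the standard references, and all the delicate points (measurability of the truncation sets $E_n$, the a.e.\ convergence of $\widetilde{G}_n$ including the case $\Vert G(t)\Vert=0$, and the role of Pettis's theorem in passing from measurability to approximation by simple functions) are handled properly.
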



\section{Main results}
\label{sec:3}

Our first result provides existence of mild solutions
to system \eqref{eq:1.1}--\eqref{eq:1.2}. To prove that,
we make use of the following assumptions:
\begin{itemize}
\item[($F_1$)] The linear closed operators $\lbrace B_{i}(t)\rbrace_{i=\overline{1,r}}$
are defined on dense sets $S_{1},\ldots,S_{r}\supset D(A)$, respectively from $X_{q}$ into $Y$.
\item[($F_2$)] The function $f: J\times X^{r}_{q}\rightarrow Y$ satisfies:
for each $W\in X^{r}_{q}$, in particular, for every element $u\in \cap_{i}S_{i}$, $i=1,\ldots,r$,
the function $t\rightarrow f(t, W(t))$ is measurable.
\item[($F_3$)] For arbitrary $u, u^{*}\in X_{q}$ satisfying $\Vert u\Vert_{q}, \Vert u^{*}\Vert_{q}\leq \rho$,
there exists a constant $L_{f}(\rho)>0$ and functions $m_{i}\in L^{1}(J, \mathbb{R}^{+})$ such that
$$
\Vert f(t, W)-f(t, W^{*})\Vert\leq L_{f}(\rho)[m_{1}(t)+\cdots+m_{r}(t)]\Vert u-u^{*}\Vert_{q}
$$
for almost all $t\in J$. Here, $W^{*}(t)=(B_{1}(t)u^{*}(t),\ldots,B_{r}(t)u^{*}(t))$, $i=1,\ldots,r$.
\item[($F_4$)] There exists a constant $a_{f}>0$ such that
$$
\Vert f(t, W)\Vert\leq a_{f}(1+r\Vert u\Vert_{q})
\ \text{ for all }\  W\in X^{r}_{q} \ \text{ and }\  t\in J.
$$
\item[($F_{5}$)] The function $h: C(J: X_{q})\rightarrow X_{q}$
is Lipschitz continuous and bounded in $X_{q}$, i.e.,
for all $u, v\in C(J, X_{q})$ there exist constants $k_{1}, k_{2}>0$ such that
$$
\Vert h(u)-h(v)\Vert_{q}\leq k_{1}\Vert u-v\Vert_{q}
\  \text{ and } \ \Vert h(u)\Vert_{q}\leq k_{2}.
$$
\end{itemize}

\begin{theorem}
\label{Theorem 3.1}
Assume hypotheses ($F_1$)--($F_5$) are satisfied.
If $u_{0}\in X_{q}$ and $\alpha q<1$ for some $\frac{1}{2}<\alpha<1$,
then system \eqref{eq:1.1}--\eqref{eq:1.2} has a mild solution on $J$.
\end{theorem}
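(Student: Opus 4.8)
The plan is to apply the Leray--Schauder fixed point theorem to the solution operator naturally associated with the integral equation in Definition~\ref{Definition 2.4}. Define $\Phi: \Omega_q \to \Omega_q$ by
$$
(\Phi u)(t) = S_{\alpha}(t)LM^{-1}\left[v_{0}+\frac{1}{\Gamma(1-\alpha)}\int_{0}^{t}\frac{u_{0}+h(u(s))}{(t-s)^{\alpha}}\,ds\right]
+\int_{0}^{t}(t-s)^{\alpha-1}T_{\alpha}(t-s)f(s, W(s))\,ds,
$$
so that mild solutions are exactly the fixed points of $\Phi$. First I would check that $\Phi$ is well defined, i.e.\ that $\Phi u \in \Omega_q = C(J, X_q)$ for each $u \in \Omega_q$: the singular kernel $(t-s)^{-\alpha}$ is integrable since $\alpha < 1$, the kernel $(t-s)^{\alpha-1}$ paired with the bound $\Vert A^{q}T_{\alpha}(t-s)\Vert \le c\,(t-s)^{-q\alpha}$ from Lemma~\ref{Lemma 2.2}(d) is integrable because $\alpha - 1 - q\alpha > -1$ is equivalent to $\alpha q < 1$, and Lemma~\ref{Lemma 2.4} together with ($F_2$), ($F_4$) guarantees the Bochner integrability of $s \mapsto f(s, W(s))$. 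Continuity in $t$ follows from strong continuity of $S_{\alpha}$, $T_{\alpha}$ (Lemma~\ref{Lemma 2.2}(b), (f)) and Lemma~\ref{Lemma 2.3} applied to the translated integrands, splitting each integral near the singularity in the usual $\epsilon$-argument.

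Next I would verify the three hypotheses of Leray--Schauder. Step (i): \emph{continuity of $\Phi$.} If $u_n \to u$ in $\Omega_q$, then $h(u_n) \to h(u)$ uniformly by the Lipschitz bound in ($F_5$), and $f(s, W_n(s)) \to f(s, W(s))$ for a.a.\ $s$ by ($F_3$) together with continuity of the $B_i(t)$; dominated convergence (using the common bound from ($F_4$)) gives $\Phi u_n \to \Phi u$. Step (ii): \emph{compactness.} For $u$ in a bounded set, $\{(\Phi u)(t)\}$ is relatively compact in $X_q$ for each $t$ because $S_{\alpha}(t)$ and $T_{\alpha}(t)$ are compact for $t>0$ (Lemma~\ref{Lemma 2.2}(c)), handled by the standard $\epsilon$-truncation $\int_0^{t-\epsilon}$ which is compact plus a tail of small norm; equicontinuity in $t$ comes from the uniform continuity in Lemma~\ref{Lemma 2.2}(f) and the integrability estimates above. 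Arzel\`a--Ascoli then yields that $\Phi$ maps bounded sets into relatively compact sets. Step (iii): \emph{the a priori bound on the Leray--Schauder set} $\{u \in \Omega_q : u = \lambda \Phi u,\ \lambda \in (0,1)\}$. For such $u$,
$$
\Vert u(t)\Vert_q \le C_1 M_0 \Vert L\Vert \Vert M^{-1}\Vert\left(\Vert v_0\Vert_q + \frac{a^{1-\alpha}}{(1-\alpha)\Gamma(1-\alpha)}\bigl(\Vert u_0\Vert_q + k_2\bigr)\right) + C\, a_f \int_0^t (t-s)^{\alpha - 1 - q\alpha}\bigl(1 + r\Vert u(s)\Vert_q\bigr)\,ds,
$$
and the singular Gronwall inequality (the singular version cited in the abstract) gives a bound on $\Vert u\Vert_{\infty}$ independent of $\lambda$. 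Hence the set is bounded and Leray--Schauder applies, producing a fixed point, which is the desired mild solution.

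The main obstacle I anticipate is the compactness/equicontinuity argument at $t=0$ and across the singularity of the kernel: one must show that the map $t \mapsto \int_0^t (t-s)^{\alpha-1}T_{\alpha}(t-s)f(s,W(s))\,ds$ is equicontinuous on all of $J$ (including at $0$, where the nonlocal term also must be controlled), which requires carefully combining the decay estimate $\Vert A^q T_\alpha(\tau)\Vert \le c\,\tau^{-q\alpha}$ with the splitting $\int_0^{t_2} - \int_0^{t_1} = \int_{t_1}^{t_2} + \int_0^{t_1}[(t_2-s)^{\alpha-1}T_\alpha(t_2-s) - (t_1-s)^{\alpha-1}T_\alpha(t_1-s)]$ and using Lemma~\ref{Lemma 2.2}(f) on the second piece away from the singularity while bounding the near-singularity contribution by a term that vanishes as $t_2 - t_1 \to 0$; the condition $\alpha q < 1$ with $\alpha > 1/2$ is exactly what makes these integrals finite. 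A secondary technical point is making precise the interplay between the operators $L$, $L^{-1}$, $M^{-1}$ and the fractional powers $A^q$ inside $S_\alpha$, $T_\alpha$ so that all norm estimates genuinely take place in $X_q$; this is where ($H_1$)--($H_3$) and Lemma~\ref{Lemma 2.1} are invoked.
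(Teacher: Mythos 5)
Your proposal follows essentially the same route as the paper: both apply Leray--Schauder to the same solution operator $P$, establishing that it maps $\Omega_{q}$ into itself, is continuous (via ($F_3$) and ($F_5$)), and is compact (via a truncation argument exploiting compactness of the semigroup, plus equicontinuity and Arzel\`a--Ascoli), and both obtain the a priori bound on the set $\lbrace u = \beta Pu\rbrace$ from ($F_4$) and the singular Gronwall inequality. The only cosmetic difference is that the paper's compactness step uses a double truncation $P_{g,\delta}$ in both the time variable and the subordination variable $\theta$ of $S_{\alpha}$, $T_{\alpha}$, which is the same idea as your $\epsilon$-truncation.
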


The following lemmas are used in the proof of Theorem~\ref{Theorem 3.1}.

\begin{lemma}
\label{Lemma 3.1}
Let operator $P: \Omega_{q}\rightarrow\Omega_{q}$ be given by
\begin{multline}
\label{eq:op:P}
(Pu)(t)=S_{\alpha}(t)LM^{-1}\left[v_{0}+\frac{1}{\Gamma(1-\alpha)}
\int_{0}^{t}\frac{[u_{0}+h(u(s))]}{(t-s)^{\alpha}}ds\right]\\
+\int_{0}^{t}(t-s)^{\alpha-1}T_{\alpha}(t-s)f(s, W(s))ds.
\end{multline}
Then, the operator $P$ satisfies $Pu\in \Omega_{q}$.
\end{lemma}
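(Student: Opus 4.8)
The plan is to show that $Pu$ is a well-defined element of $\Omega_q = C(J, X_q)$ by verifying two things: first, that for each fixed $t \in J$ the value $(Pu)(t)$ lies in $X_q$, and second, that $t \mapsto (Pu)(t)$ is continuous from $J$ into $X_q$. I would split $(Pu)(t)$ into the two natural pieces, writing $(Pu)(t) = (P_1 u)(t) + (P_2 u)(t)$, where $P_1$ is the term involving $S_\alpha(t)$ and the nonlocal data and $P_2$ is the convolution integral $\int_0^t (t-s)^{\alpha-1} T_\alpha(t-s) f(s, W(s))\,ds$, and treat each separately.

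For $P_1 u$, I would first check that the inner bracket $v_0 + \frac{1}{\Gamma(1-\alpha)} \int_0^t \frac{u_0 + h(u(s))}{(t-s)^\alpha}\,ds$ defines a continuous $X_q$-valued function of $t$: the integral exists in Bochner's sense because $\|u_0 + h(u(s))\|_q \le \|u_0\|_q + k_2$ by ($F_5$) and $(t-s)^{-\alpha}$ is integrable on $[0,t]$ since $\alpha < 1$, and continuity in $t$ follows from the absolute continuity of the integral together with the uniform bound. Applying the bounded operator $LM^{-1}$ keeps us in the right space (using $\|L^{-1}\| = C_1$, $\|M^{-1}\| = C_2$), and then Lemma~\ref{Lemma 2.2}(a),(e) gives $S_\alpha(t)$ mapping into $X_q$ with $\|S_\alpha(t)v\|_q \le C_1 M_0 \|v\|_q$, while Lemma~\ref{Lemma 2.2}(b),(f) gives strong continuity of $S_\alpha(\cdot)$. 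Combining the continuity of $t \mapsto S_\alpha(t)$ (in the strong sense) with the continuity of the inner bracket yields continuity of $P_1 u$ in $X_q$.

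For $P_2 u$, the key point is the estimate on $A^q T_\alpha(t)$ from Lemma~\ref{Lemma 2.2}(d), namely $\|A^q T_\alpha(t)\| \le \frac{\alpha C_1 M_q \Gamma(2-q)}{\Gamma(1+\alpha(1-q))} t^{-q\alpha}$, so that the integrand's $X_q$-norm is bounded by a constant times $(t-s)^{\alpha-1}(t-s)^{-q\alpha}\|f(s,W(s))\|$. Using ($F_4$), $\|f(s,W(s))\| \le a_f(1 + r\|u(s)\|_q) \le a_f(1 + r\|u\|_\infty)$ is bounded on $J$, and the scalar kernel $(t-s)^{\alpha-1-q\alpha} = (t-s)^{\alpha(1-q)-1}$ is integrable on $[0,t]$ precisely because $\alpha q < 1$ forces $\alpha(1-q) > 0$; by Lemma~\ref{Lemma 2.4} the integral is a genuine Bochner integral in $X_q$. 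For continuity of $P_2 u$ at a point $t \in J$, I would compare $(P_2 u)(t_2)$ and $(P_2 u)(t_1)$ for $t_1 < t_2$ by splitting the difference in the standard way: the tail piece $\int_{t_1}^{t_2}$ is controlled directly by the integrable kernel, and on $[0, t_1]$ one writes $(t_2-s)^{\alpha-1}T_\alpha(t_2-s) - (t_1-s)^{\alpha-1}T_\alpha(t_1-s)$ and uses the uniform continuity of $T_\alpha$ from Lemma~\ref{Lemma 2.2}(f) together with the dominated convergence theorem against the integrable majorant.

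The main obstacle is the continuity argument for $P_2 u$: the integrand has an integrable but genuine singularity at $s = t$, so one cannot simply pass the limit inside naively, and care is needed to isolate the near-$s=t$ portion and bound it uniformly in $t_1, t_2$ before applying strong continuity of $T_\alpha$ away from the singularity. Everything else — measurability of $s \mapsto f(s, W(s))$ from ($F_2$), the boundedness estimates, and the Bochner integrability — is routine once the hypotheses ($F_1$)–($F_5$) and Lemmas~\ref{Lemma 2.1}–\ref{Lemma 2.4} are invoked. I would therefore devote most of the written proof to the $t_1 \to t_2$ estimate for $P_2 u$ and dispatch $P_1 u$ and the pointwise $X_q$-membership claims more briefly.
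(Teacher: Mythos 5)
Your argument is correct and follows the same overall structure as the paper's: both estimate $\Vert (Pu)(t_1)-(Pu)(t_2)\Vert_q$ by splitting the difference into the same pieces (operator differences $S_\alpha(t_1)-S_\alpha(t_2)$ and $T_\alpha(t_1-s)-T_\alpha(t_2-s)$, kernel differences $(t_1-s)^{-\alpha}-(t_2-s)^{-\alpha}$ and $(t_1-s)^{\alpha-1}-(t_2-s)^{\alpha-1}$, and the tail integrals over $[t_1,t_2]$), then invoke Lemma~\ref{Lemma 2.2} and the bounds from ($F_4$)--($F_5$). Where you diverge is in how the two ``kernel difference against singular weight'' integrals in the $T_\alpha$-convolution term are killed: the paper applies the Cauchy--Schwarz inequality to separate $\vert (t_1-s)^{-q\alpha}-(t_2-s)^{-q\alpha}\vert$ from $(t_1-s)^{\alpha-1}$ (and symmetrically for the other term), which is precisely where the standing restrictions $\alpha>\tfrac12$ and $q\alpha<\tfrac12$ enter, and which produces an explicit quantitative modulus of continuity; you instead propose dominated convergence against the integrable majorant $(t_1-s)^{\alpha(1-q)-1}$, which needs only $\alpha(1-q)>0$ and so avoids those extra restrictions. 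The trade-off is that the paper's explicit $I_1,\ldots,I_6$ estimates are reused verbatim in the proof of Lemma~\ref{Lemma 3.3} to get equicontinuity of $P\Sigma$ uniformly over bounded sets $\Sigma$; your dominated-convergence argument, as stated, gives continuity for a fixed $u$ only, so if you carried it forward to the compactness step you would need to observe that your majorant and the rate of the tail estimate are in fact uniform in $u\in\Sigma$ (they are, since they depend on $u$ only through $\tau=a_f(1+r\eta)$). For Lemma~\ref{Lemma 3.1} itself your route is complete and, if anything, slightly more economical in hypotheses.
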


\begin{proof}
Let $0\leq t_{1}<t_{2}\leq a$ and $\alpha q<\frac{1}{2}$. We have
\begin{equation*}
\begin{split}
&\Vert (Pu)(t_{1})-(Pu)(t_{2})\Vert_{q}\\
&=\left\Vert[S_{\alpha}(t_{1})-S_{\alpha}(t_{2})]LM^{-1}\left[v_{0}+\frac{1}{\Gamma(1-\alpha)}
\int_{0}^{t_{1}}(t_{1}-s)^{-\alpha}[u_{0}+h(u(s))]ds\right]\right\Vert_{q}\\
&\quad +\left\Vert S_{\alpha}(t_{2})LM^{-1}\left[\frac{1}{\Gamma(1-\alpha)}
\int_{0}^{t_{1}}[(t_{1}-s)^{-\alpha}-(t_{2}-s)^{-\alpha}][u_{0}+h(u(s))]ds\right]\right\Vert_{q}\\
&\quad +\left\Vert S_{\alpha}(t_{2})LM^{-1}\left[\frac{1}{\Gamma(1-\alpha)}
\int_{t_{1}}^{t_{2}}(t_{2}-s)^{-\alpha}[u_{0}+h(u(s))]ds\right]\right\Vert_{q}\\
&\quad +\int_{0}^{t_{1}}(t_{1}-s)^{\alpha-1}\Vert T_{\alpha}(t_{1}-s)f(s, W(s))
-T_{\alpha}(t_{2}-s)f(s, W(s))\Vert_{q} ds\\
&\quad +\int_{0}^{t_{1}}\vert(t_{1}-s)^{\alpha-1}-(t_{2}-s)^{\alpha-1}
\vert\Vert T_{\alpha}(t_{2}-s)f(s, W(s))\Vert_{q} ds\\
&\quad +\int_{t_{1}}^{t_{2}}(t_{2}-s)^{\alpha-1}\Vert T_{\alpha}(t_{2}-s)f(s, W(s))\Vert_{q} ds.
\end{split}
\end{equation*}
We use Lemma~\ref{Lemma 2.2}, and fractional power of operators, to get
\begin{equation*}
\begin{split}
\Vert (Pu)(t_{1})-(Pu)(t_{2})\Vert_{q}
\leq & C_{2}\Vert L\Vert \left[\Vert v_{0}\Vert_{q}+(k_{2}+\Vert u_{0}
\Vert_{q})\frac{t_{1}^{1-\alpha}}{\Gamma(2-\alpha)}\right]
\Vert S_{\alpha}(t_{1})-S_{\alpha}(t_{2})\Vert_{q}\\
&+C_{1}C_{2}M_{0}\Vert L\Vert \left[(k_{2}+\Vert u_{0}\Vert_{q})
\frac{(t_{2}-t_{1})^{1-\alpha}+t_{1}^{1-\alpha}-t_{2}^{1-\alpha}}{\Gamma(2-\alpha)}\right]\\
&+C_{1}C_{2}M_{0}\Vert L\Vert \left[(k_{2}+\Vert u_{0}\Vert_{q})
\frac{(t_{2}-t_{1})^{1-\alpha}}{\Gamma(2-\alpha)}\right]\\
&+\int_{0}^{t_{1}}(t_{1}-s)^{\alpha-1}\Vert A^{q}[T_{\alpha}(t_{1}-s)
-T_{\alpha}(t_{2}-s)]\Vert \Vert f(s, W(s))\Vert ds\\
&+\int_{0}^{t_{1}}\vert(t_{1}-s)^{\alpha-1}-(t_{2}-s)^{\alpha-1}
\vert\Vert A^{q}T_{\alpha}(t_{2}-s)\Vert \Vert f(s, W(s))\Vert ds\\
&+\int_{t_{1}}^{t_{2}}(t_{2}-s)^{\alpha-1}\Vert A^{q}T_{\alpha}(t_{2}-s)
\Vert \Vert f(s, W(s))\Vert ds
\end{split}
\end{equation*}
\begin{equation*}
\begin{split}
\leq & C_{2}\Vert L\Vert \left[\Vert v_{0}\Vert_{q}+(k_{2}+\Vert u_{0}\Vert_{q})
\frac{t_{1}^{1-\alpha}}{\Gamma(2-\alpha)}\right]\Vert S_{\alpha}(t_{1})-S_{\alpha}(t_{2})\Vert_{q}\\
&+C_{1}C_{2}M_{0}\Vert L\Vert \left[(k_{2}+\Vert u_{0}\Vert_{q})
\frac{(t_{2}-t_{1})^{1-\alpha}+t_{1}^{1-\alpha}-t_{2}^{1-\alpha}}{\Gamma(2-\alpha)}\right]\\
&+C_{1}C_{2}M_{0}\Vert L\Vert \left[(k_{2}+\Vert u_{0}\Vert_{q})
\frac{(t_{2}-t_{1})^{1-\alpha}}{\Gamma(2-\alpha)}\right]\\
&+\frac{\alpha C_{1}M_{q}\Gamma(2-q)}{\Gamma(1+\alpha(1-q))}\Vert f\Vert_{C(J, X)}
\int_{0}^{t_{1}}(t_{1}-s)^{\alpha-1}\vert (t_{1}-s)^{-q\alpha}-(t_{2}-s)^{-q\alpha}\vert ds\\
&+\frac{\alpha C_{1}M_{q}\Gamma(2-q)}{\Gamma(1+\alpha(1-q))}
\int_{0}^{t_{1}}\vert(t_{1}-s)^{\alpha-1}-(t_{2}-s)^{\alpha-1}
\vert(t_{2}-s)^{-q\alpha}\Vert f(s, W(s))\Vert ds\\
&+\frac{\alpha C_{1}M_{q}\Gamma(2-q)}{\Gamma(1+\alpha(1-q))}
\int_{t_{1}}^{t_{2}}(t_{2}-s)^{-q\alpha+\alpha-1}\Vert f(s, W(s))\Vert ds.
\end{split}
\end{equation*}
From Lemma~\ref{Lemma 2.2} and H\"older's inequality, one can
deduce the following inequality:
\begin{equation*}
\begin{split}
\Vert (Pu)&(t_{1})-(Pu)(t_{2})\Vert_{q}\\
&\leq C_{2}\Vert L\Vert \left[\Vert v_{0}\Vert_{q}+(k_{2}+\Vert u_{0}\Vert_{q})
\frac{t_{1}^{1-\alpha}}{\Gamma(2-\alpha)}\right]\Vert S_{\alpha}(t_{1})-S_{\alpha}(t_{2})\Vert_{q}\\
&\quad + C_{1}C_{2}M_{0}\Vert L\Vert \left[(k_{2}+\Vert u_{0}\Vert_{q})
\frac{(t_{2}-t_{1})^{1-\alpha}+t_{1}^{1-\alpha}-t_{2}^{1-\alpha}}{\Gamma(2-\alpha)}\right]\\
&\quad + C_{1}C_{2}M_{0}\Vert L\Vert \left[(k_{2}+\Vert u_{0}\Vert_{q})
\frac{(t_{2}-t_{1})^{1-\alpha}}{\Gamma(2-\alpha)}\right]\\
&\quad + \frac{\alpha C_{1}M_{q}\Gamma(2-q)}{\Gamma(1+\alpha(1-q))}
\Vert f\Vert_{C(J, X)} \Biggl[\left(\int_{0}^{t_{1}}\vert (t_{1}-s)^{-q\alpha}
-(t_{2}-s)^{-q\alpha}\vert^{2} ds\right)^{\frac{1}{2}}\\
&\quad \times\left(\int_{0}^{t_{1}}(t_{1}-s)^{2(\alpha-1)} ds\right)^{\frac{1}{2}}
+\left(\int_{0}^{t_{1}}\vert(t_{1}-s)^{\alpha-1}
-(t_{2}-s)^{\alpha-1}\vert^{2} ds\right)^{\frac{1}{2}}\\
&\quad \times\left(\int_{0}^{t_{1}}(t_{2}-s)^{-2q\alpha} ds\right)^{\frac{1}{2}}
+\frac{1}{\alpha(1-q)}(t_{2}-t_{1})^{\alpha(1-q)}\Biggl]\\
&\leq C_{2}\Vert L\Vert \left[\Vert v_{0}\Vert_{q}+(k_{2}+\Vert u_{0}\Vert_{q})
\frac{t_{1}^{1-\alpha}}{\Gamma(2-\alpha)}\right]\Vert S_{\alpha}(t_{1})-S_{\alpha}(t_{2})\Vert_{q}\\
&\quad + C_{1}C_{2}M_{0}\Vert L\Vert \left[(k_{2}+\Vert u_{0}\Vert_{q})
\frac{(t_{2}-t_{1})^{1-\alpha}+t_{1}^{1-\alpha}-t_{2}^{1-\alpha}}{\Gamma(2-\alpha)}\right]\\
&\quad + C_{1}C_{2}M_{0}\Vert L\Vert \left[(k_{2}+\Vert u_{0}\Vert_{q})
\frac{(t_{2}-t_{1})^{1-\alpha}}{\Gamma(2-\alpha)}\right]\\
&\quad + \frac{\alpha C_{1}M_{q}\Gamma(2-q)}{\Gamma(1+\alpha(1-q))}\Vert f\Vert_{C(J, X)}
\Biggl[\sqrt{\frac{1}{2\alpha-1}}t_{1}^{\alpha-\frac{1}{2}}\left(
\int_{0}^{a}\vert (t_{1}-s)^{-q\alpha}-(t_{2}-s)^{-q\alpha}\vert^{2} ds\right)^{\frac{1}{2}}\\
&\quad +\left(\int_{0}^{a}\vert(t_{1}-s)^{\alpha-1}-(t_{2}-s)^{\alpha-1}\vert^{2} ds\right)^{\frac{1}{2}}
\sqrt{\frac{1}{1-2q\alpha}}\biggl(t_{2}^{1-2q\alpha}-(t_{2}-t_{1})^{1-2q\alpha}\biggr)^{\frac{1}{2}}\\
&\quad +\frac{1}{\alpha(1-q)}(t_{2}-t_{1})^{\alpha(1-q)}\Biggr],
\end{split}
\end{equation*}
which means that $Pu\in \Omega_{q}$.
\end{proof}

\begin{lemma}
\label{Lemma 3.2.}
The operator $P$ given by \eqref{eq:op:P} is continuous on $\Omega_{q}$.
\end{lemma}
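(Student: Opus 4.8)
The plan is to show continuity of $P$ at an arbitrary point $u\in\Omega_q$ by taking a sequence $u_n\to u$ in $\Omega_q$ (equivalently, $\sup_{t\in J}\Vert u_n(t)-u(t)\Vert_q\to 0$) and estimating $\Vert (Pu_n)(t)-(Pu)(t)\Vert_q$ uniformly in $t\in J$. Write $\rho>0$ for a common bound on $\Vert u_n\Vert_\infty$ and $\Vert u\Vert_\infty$, valid for all large $n$ since the sequence converges. The difference $(Pu_n)(t)-(Pu)(t)$ splits into two contributions: the ``nonlocal'' term involving $h$ and the ``forcing'' term involving $f$; I would bound each separately and then take $\sup$ over $t$.

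First I would handle the nonlocal term. Using Lemma~\ref{Lemma 2.2}(e) to bound $\Vert S_\alpha(t)\cdot\Vert_q\le C_1M_0\Vert\cdot\Vert_q$, together with $\Vert L M^{-1}\Vert\le \Vert L\Vert C_2$ and hypothesis ($F_5$), one gets
$$
\left\Vert S_\alpha(t)LM^{-1}\frac{1}{\Gamma(1-\alpha)}\int_0^t \frac{h(u_n(s))-h(u(s))}{(t-s)^\alpha}\,ds\right\Vert_q
\le C_1C_2M_0\Vert L\Vert\, k_1\Vert u_n-u\Vert_\infty\,\frac{t^{1-\alpha}}{\Gamma(2-\alpha)},
$$
and the right-hand side is bounded uniformly in $t\in J$ by $C_1C_2M_0\Vert L\Vert k_1 a^{1-\alpha}/\Gamma(2-\alpha)\cdot\Vert u_n-u\Vert_\infty\to 0$.

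Next I would handle the forcing term. For this I would use Lemma~\ref{Lemma 2.2}(d), namely $\Vert A^qT_\alpha(t-s)\Vert\le \frac{\alpha C_1 M_q\Gamma(2-q)}{\Gamma(1+\alpha(1-q))}(t-s)^{-q\alpha}$, to write
$$
\Vert (Pu_n)_f(t)-(Pu)_f(t)\Vert_q
\le \frac{\alpha C_1 M_q\Gamma(2-q)}{\Gamma(1+\alpha(1-q))}\int_0^t (t-s)^{\alpha-1-q\alpha}\Vert f(s,W_n(s))-f(s,W(s))\Vert\,ds,
$$
where $W_n(s)=(B_1(s)u_n(s),\dots,B_r(s)u_n(s))$. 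Then ($F_3$) with the bound $\rho$ gives a pointwise estimate of the integrand by $L_f(\rho)[m_1(s)+\cdots+m_r(s)]\Vert u_n-u\Vert_\infty$. Since $\alpha-1-q\alpha=\alpha(1-q)-1>-1$ (using $\alpha q<1$, in fact the stronger $\alpha q<1/2$ is available), $(t-s)^{\alpha-1-q\alpha}$ is integrable near $s=t$; combined with $m_i\in L^1(J,\mathbb{R}^+)$ and Hölder's inequality (choosing an exponent $p$ with $1<p<\frac{1}{1-\alpha(1-q)}$ so that $(\cdot)^{(\alpha-1-q\alpha)p'}$ is integrable and using $m_i\in L^1\cap$ the dominating construction, or more simply bounding $m_i$ in $L^1$ and $(t-s)^{\alpha-1-q\alpha}\le$ an $L^\infty$-away-from-$s=t$ piece plus the singular piece), one obtains a constant $N(\rho)$ independent of $t$ and $n$ with
$$
\Vert (Pu_n)_f(t)-(Pu)_f(t)\Vert_q\le N(\rho)\,\Vert u_n-u\Vert_\infty.
$$
Taking the supremum over $t\in J$ and then letting $n\to\infty$ yields $\Vert Pu_n-Pu\Vert_\infty\to 0$, which is the asserted continuity.

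The main obstacle is the singular kernel $(t-s)^{\alpha-1-q\alpha}$ together with the merely $L^1$ (not $L^\infty$) weights $m_i$: one must be slightly careful that the product $(t-s)^{\alpha-1-q\alpha}[m_1(s)+\cdots+m_r(s)]$ has a finite integral bounded uniformly in $t\in J$. This is where I would either invoke Hölder with a suitable conjugate pair (as is done elsewhere in the paper), exploiting $1/2<\alpha<1$ and $\alpha q<1/2$ so that $(t-s)^{2(\alpha-1-q\alpha)}$ is integrable on $J$ and $m_i\in L^1\subset L^1$, or else strengthen ($F_3$)'s weights to $L^p$ if needed; in any case the exponent condition $\alpha(1-q)>1/2$ guaranteed by the hypotheses makes the required integral finite, and the bound is manifestly uniform in $t$ because the integration is over $[0,t]\subset[0,a]$. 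Everything else is a routine application of Lemmas~\ref{Lemma 2.2} and the Lipschitz hypotheses ($F_3$), ($F_5$).
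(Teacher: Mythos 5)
Your proof is correct and follows essentially the same route as the paper's: the same splitting of $(Pu)(t)-(Pu^{*})(t)$ into the nonlocal $h$-term and the forcing $f$-term, the same use of Lemma~\ref{Lemma 2.2} (d) and (e) together with ($F_3$) and ($F_5$), and the same uniform-in-$t$ estimates leading to a Lipschitz-type bound in $\Vert u-u^{*}\Vert_{\infty}$. The one point where you are actually more careful than the paper is the treatment of the weights $m_{i}$: the paper's displayed estimate leaves $L_{f}(\rho)\sum_{i}m_{i}(t)$ \emph{outside} the integral $\int_{0}^{t}(t-s)^{-q\alpha+\alpha-1}\Vert u-u^{*}\Vert_{q}\,ds$, which is not what ($F_3$) delivers (the weights should appear as $m_{i}(s)$ inside the integral), whereas you correctly isolate the need to control $\int_{0}^{t}(t-s)^{\alpha-1-q\alpha}\bigl[m_{1}(s)+\cdots+m_{r}(s)\bigr]\,ds$ uniformly in $t$. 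Note, however, that your H\"older fix requires $m_{i}\in L^{p}$ with $p>1/(\alpha(1-q))$ (e.g.\ $p=2$, since $\alpha(1-q)>\tfrac12$ here), not the $m_{i}\in L^{1}$ stated in ($F_3$); this is a soft spot in the hypothesis itself that the paper's own proof does not resolve either, so it is a defect of the statement rather than of your argument.
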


\begin{proof}
Let $u, u^{*}\in \Omega_{q}$ and $\Vert u-u^{*}\Vert_{\infty}\leq1$.
Then, $\Vert u\Vert_{\infty}\leq1+\Vert u^{*}\Vert_{\infty}=\rho$ and
\begin{equation*}
\begin{split}
\Vert(Pu)(t)-(Pu^{*})(t)\Vert_{q}
&=\left\Vert S_{\alpha}(t)LM^{-1}\left[
\frac{1}{\Gamma(1-\alpha)}\int_{0}^{t}(t-s)^{-\alpha}[h(u)-h(u^{*})]ds\right]\right\Vert_{q}\\
&\quad +\int_{0}^{t}(t-s)^{\alpha-1}\Vert T_{\alpha}(t-s)[f(s, W(s))-f(s, W^{*}(s))]\Vert_{q} ds\\
&\leq\Vert S_{\alpha}(t)LM^{-1}\Vert\frac{1}{\Gamma(1-\alpha)}
\int_{0}^{t}(t-s)^{-\alpha}\left\Vert A^{q}[h(u)-h(u^{*})]\right\Vert ds\\
&\quad +\int_{0}^{t}(t-s)^{\alpha-1}\Vert A^{q}T_{\alpha}(t-s)\Vert \Vert f(s, W(s))-f(s, W^{*}(s))\Vert ds\\
&\leq C_{1}C_{2}k_{1}M_{0}\Vert L\Vert\frac{a^{1-\alpha}}{\Gamma(2-\alpha)} \Vert u-u^{*}\Vert_{q}\\
&\quad + L_{f}(\rho)\sum\limits_{i=1}^{r}m_{i}(t)\frac{\alpha C_{1}M_{q}\Gamma(2-q)}{\Gamma(1+\alpha(1-q))}
\int_{0}^{t}(t-s)^{-q\alpha+\alpha-1}\Vert u-u^{*}\Vert_{q}ds\\
&\leq C_{1}C_{2}k_{1}M_{0}\Vert L\Vert\frac{a^{1-\alpha}}{\Gamma(2-\alpha)} \Vert u-u^{*}\Vert_{\infty}\\
&\quad +L_{f}(\rho)\sum\limits_{i=1}^{r}m_{i}(t)\frac{\alpha C_{1}M_{q}\Gamma(2-q)}{\Gamma(1+\alpha(1-q))}
\frac{1}{\alpha(1-q)}t^{\alpha(1-q)}\Vert u-u^{*}\Vert_{\infty}.
\end{split}
\end{equation*}
Therefore,
\begin{multline*}
\Vert(Pu)(t)-(Pu^{*})(t)\Vert_{\infty}
\leq C_{1}C_{2}k_{1}M_{0}
\Vert L\Vert\frac{a^{1-\alpha}}{\Gamma(2-\alpha)} \Vert u-u^{*}\Vert_{\infty}\\
+L_{f}(\rho)\sum\limits_{i=1}^{r}m_{i}(t)
\frac{\alpha C_{1}M_{q}\Gamma(2-q)}{\Gamma(1+\alpha(1-q))}
\frac{1}{\alpha(1-q)}t^{\alpha(1-q)}\Vert u-u^{*}\Vert_{\infty}
\end{multline*}
and we conclude that $P$ is continuous.
\end{proof}

\begin{lemma}
\label{Lemma 3.3}
The operator $P$ given by \eqref{eq:op:P} is compact.
\end{lemma}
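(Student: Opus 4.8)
The plan is to apply the Arzel\`a--Ascoli theorem. Fix $\rho>0$ and put $B_{\rho}=\{u\in\Omega_{q}:\Vert u\Vert_{\infty}\leq\rho\}$; it suffices to show that $P(B_{\rho})$ is relatively compact in $\Omega_{q}=C(J,X_{q})$, i.e.\ that (i) $P(B_{\rho})$ is uniformly bounded, (ii) $P(B_{\rho})$ is equicontinuous, and (iii) for every $t\in J$ the section $V(t)=\{(Pu)(t):u\in B_{\rho}\}$ is relatively compact in $X_{q}$. Step (i) is immediate: with $\Vert h(u)\Vert_{q}\leq k_{2}$ from ($F_5$), $\Vert f(s,W(s))\Vert\leq a_{f}(1+r\rho)=:N$ from ($F_4$), and the bounds in Lemma~\ref{Lemma 2.2}(a),(e),(d), one estimates $\Vert(Pu)(t)\Vert_{q}$ by a constant depending only on $\rho$, $a$ and the data. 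Step (ii) is obtained from the very same decomposition and chain of estimates carried out in the proof of Lemma~\ref{Lemma 3.1}, which already bound $\Vert(Pu)(t_{1})-(Pu)(t_{2})\Vert_{q}$, for $0\leq t_{1}<t_{2}\leq a$, by a quantity that tends to $0$ as $t_{2}-t_{1}\to0$ and is uniform over $u\in B_{\rho}$ (replace $\Vert f\Vert_{C(J,X)}$ by $N$ and use $\Vert h(u)\Vert_{q}\leq k_{2}$); the $S_{\alpha}$--terms are handled by the strong and uniform continuity in Lemma~\ref{Lemma 2.2}(b),(f).

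The crux is (iii). Write $(Pu)(t)=(P^{1}u)(t)+(P^{2}u)(t)$ according to the two summands of \eqref{eq:op:P}. Since $(P^{1}u)(t)=S_{\alpha}(t)LM^{-1}w_{u}(t)$ with $w_{u}(t)=v_{0}+\frac{1}{\Gamma(1-\alpha)}\int_{0}^{t}(t-s)^{-\alpha}[u_{0}+h(u(s))]\,ds$ ranging over a bounded set as $u$ varies in $B_{\rho}$, and $S_{\alpha}(t)$ is compact by Lemma~\ref{Lemma 2.2}(c) while $L^{-1},M^{-1}$ are compact by ($H_3$), the set $\{(P^{1}u)(t):u\in B_{\rho}\}$ is relatively compact in $X_{q}$. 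For $t=0$ one has $(P^{2}u)(0)=0$, so assume $t>0$. For $0<\epsilon<t$ and $\delta>0$ set
\[
(P^{2}_{\epsilon,\delta}u)(t)=\alpha\int_{0}^{t-\epsilon}\!\!\int_{\delta}^{\infty}(t-s)^{\alpha-1}\theta\,\zeta_{\alpha}(\theta)\,L^{-1}Q\big((t-s)^{\alpha}\theta\big)f(s,W(s))\,d\theta\,ds .
\]
On this domain $(t-s)^{\alpha}\theta\geq\epsilon^{\alpha}\delta>0$, so $Q\big((t-s)^{\alpha}\theta\big)=Q(\epsilon^{\alpha}\delta)\,Q\big((t-s)^{\alpha}\theta-\epsilon^{\alpha}\delta\big)$ and the fixed operator $Q(\epsilon^{\alpha}\delta)$ may be pulled out of the integral. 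The remaining integral stays in a bounded subset of $X$ (by ($F_4$) and Lemma~\ref{Lemma 2.2}(a)), while $Q(\epsilon^{\alpha}\delta)$, regarded as a map into $X_{q}$, is compact: it sends $X$ into $D(A^{q})$ with $A^{q}Q(\epsilon^{\alpha}\delta)=\big(A^{q}Q(\epsilon^{\alpha}\delta/2)\big)Q(\epsilon^{\alpha}\delta/2)$ a bounded operator times a compact one, hence compact on $X$ (Lemma~\ref{Lemma 2.1}(a),(c) and compactness of the semigroup). Thus $\{(P^{2}_{\epsilon,\delta}u)(t):u\in B_{\rho}\}$ is relatively compact in $X_{q}$.

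It remains to show that $(P^{2}_{\epsilon,\delta}u)(t)$ approximates $(P^{2}u)(t)$ uniformly in $u\in B_{\rho}$ as $\epsilon,\delta\to0$: the part of $(P^{2}u)(t)$ over $s\in[t-\epsilon,t]$ is bounded, via Lemma~\ref{Lemma 2.2}(d) and ($F_4$), by $\frac{\alpha C_{1}M_{q}\Gamma(2-q)}{\Gamma(1+\alpha(1-q))}\,N\int_{t-\epsilon}^{t}(t-s)^{\alpha(1-q)-1}\,ds=O(\epsilon^{\alpha(1-q)})$, and the part with $\theta\in(0,\delta)$ is bounded by a constant times $N\int_{0}^{\delta}\theta\,\zeta_{\alpha}(\theta)\,d\theta$, which tends to $0$ as $\delta\to0$ since $\int_{0}^{\infty}\theta\,\zeta_{\alpha}(\theta)\,d\theta=\Gamma(2)/\Gamma(1+\alpha)<\infty$ by Remark~\ref{Remark 2.5}. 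Hence each $V(t)$ is totally bounded, so relatively compact, in $X_{q}$, and Arzel\`a--Ascoli gives that $P(B_{\rho})$ is relatively compact in $\Omega_{q}$; since $B_{\rho}$ was arbitrary, $P$ is compact.

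The step I expect to be the main obstacle is (iii): extracting relative compactness of the weakly singular Volterra term at a fixed time even though the operators $T_{\alpha}(\tau)$ are compact only for $\tau>0$. The double cut-off — in $\epsilon$ to move away from the singularity at $s=t$, and in $\delta$ to isolate a single fixed operator $Q(\epsilon^{\alpha}\delta)$ out of the density integral — is what makes it work; the points requiring care are that the resulting approximation error be genuinely uniform over $u\in B_{\rho}$ and that $Q(\epsilon^{\alpha}\delta)$ be treated as a compact map into the stronger space $X_{q}$ and not merely into $X$.
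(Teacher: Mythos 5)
Your proof is correct and follows essentially the same route as the paper: uniform boundedness, equicontinuity recycled from the estimates of Lemma~\ref{Lemma 3.1}, and relative compactness of the sections via the double cut-off (in the time variable near the singularity and in $\theta$ near $0$) that isolates the fixed compact operator $Q(\epsilon^{\alpha}\delta)$, followed by Arzel\`a--Ascoli. The only (harmless) variation is that you treat the nonlocal summand $(P^{1}u)(t)$ directly through the compactness of $S_{\alpha}(t)$ and of $L^{-1},M^{-1}$, whereas the paper folds it into the same cut-off approximation as the Volterra term.
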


\begin{proof}
Let $\Sigma$ be a bounded subset of $\Omega_{q}$. Then there exists a constant $\eta$
such that $\Vert u\Vert_{\infty}\leq \eta$ for all $u\in \Sigma$.
By ($F_4$), there exists a constant $\tau$ such that
$\Vert f(t, W(t))\Vert\leq a_{f}(1+r\eta)=\tau$. Then $P\Sigma$
is a bounded subset of $\Omega_{q}$. In fact, let $u\in \Sigma$.
Using Lemma~\ref{Lemma 2.2} (a) and (d), we get
\begin{equation*}
\begin{split}
\Vert(Pu)(t)\Vert_{q}
&\leq \left\Vert S_{\alpha}(t)LM^{-1}\left[v_{0}
+\frac{1}{\Gamma(1-\alpha)}\int_{0}^{t}
\frac{[u_{0}+h(u(s))]}{(t-s)^{\alpha}}ds\right]\right\Vert_{q}\\
&\quad +\int_{0}^{t}(t-s)^{\alpha-1}\Vert T_{\alpha}(t-s)f(s, W(s))\Vert_{q}ds\\
&\leq C_{1}C_{2}M_{0}\Vert L\Vert\left[\Vert v_{0}\Vert_{q}
+\frac{a^{1-\alpha}}{\Gamma(2-\alpha)}(k_{2}+\Vert u_{0}\Vert_{q})\right]\\
&\quad +\int_{0}^{t}(t-s)^{\alpha-1}\Vert A^{q}T_{\alpha}(t-s)
\Vert \Vert f(s, W(s))\Vert ds\\
&\leq C_{1}C_{2}M_{0}\Vert L\Vert\left[\Vert v_{0}\Vert_{q}
+\frac{a^{1-\alpha}}{\Gamma(2-\alpha)}(k_{2}+\Vert u_{0}\Vert_{q})\right]\\
&\quad +\frac{\alpha C_{1}M_{q}\Gamma(2-q)}{\Gamma(1+\alpha(1-q))}\tau
\int_{0}^{t}(t-s)^{-q\alpha+\alpha-1}ds\\
&\leq C_{1}C_{2}M_{0}\Vert L\Vert\left[\Vert v_{0}\Vert_{q}
+\frac{a^{1-\alpha}}{\Gamma(2-\alpha)}(k_{2}+\Vert u_{0}\Vert_{q})\right]\\
&\quad +\frac{\alpha C_{1}M_{q}\Gamma(2-q)}{\Gamma(1+\alpha(1-q))}\tau
\frac{1}{\alpha(1-q)}t^{\alpha(1-q)}.
\end{split}
\end{equation*}
Then, we obtain
$$
\Vert(Pu)(t)\Vert_{\infty}\leq C_{1}C_{2}M_{0}\Vert L\Vert\left[\eta
+\frac{a^{1-\alpha}}{\Gamma(2-\alpha)}(k_{2}+\eta)\right]+\frac{\alpha C_{1}
M_{q}\Gamma(2-q)}{\Gamma(1+\alpha(1-q))}\frac{\tau a^{\alpha(1-q)}}{\alpha(1-q)}.
$$
We conclude that $P\Sigma$ is bounded.
Define $\Pi=P\Sigma$ and $\Pi(t)=\lbrace (Pu)(t)\vert u\in \Sigma\rbrace$ for $t\in J$.
Obviously, $\Pi(0)=\lbrace (Pu)(0)\vert u\in \Sigma\rbrace$ is compact.
For each $g\in (0, t)$, $t\in (0, a]$, and arbitrary $\delta>0$, let us define
$\Pi_{g,\delta}(t)=\lbrace (P_{g,\delta}u)(t)\vert u\in \Sigma\rbrace$,
where
\begin{equation*}
\begin{split}
(P_{g,\delta}u)(t)
&=Q(g^{\alpha}\delta)\int_{\delta}^{\infty}L^{-1}\zeta_{\alpha}(\theta)Q(t^{\alpha}\theta
-g^{\alpha}\delta)LM^{-1}\left[v_{0}+\frac{1}{\Gamma(1-\alpha)}\int_{0}^{t-g}
\frac{[u_{0}+h(u(s))]}{(t-s)^{\alpha}}ds\right]d\theta\\
&\quad + Q(g^{\alpha}\delta)\int_{0}^{t-g}(t-s)^{\alpha-1}\left(\alpha
\int_{\delta}^{\infty}L^{-1}\theta\zeta_{\alpha}(\theta)Q((t-s)^{\alpha}\theta
-g^{\alpha}\delta)d\theta\right)f(s, W(s))ds\\
&=\int_{\delta}^{\infty}L^{-1}\zeta_{\alpha}(\theta)Q(t^{\alpha}\theta)LM^{-1}\left[
v_{0}+\frac{1}{\Gamma(1-\alpha)}\int_{0}^{t-g}\frac{[u_{0}+h(u(s))]}{(t-s)^{\alpha}}ds\right]d\theta\\
&\quad +\alpha\int_{0}^{t-g}\int_{\delta}^{\infty}\theta(t-s)^{\alpha-1}
L^{-1}\zeta_{\alpha}(\theta)Q((t-s)^{\alpha}\theta)f(s, W(s))d\theta ds.
\end{split}
\end{equation*}
Then, since the operator $Q(g^{\alpha}\delta)$, $g^{\alpha}\delta>0$, is compact
in $X_{q}$, the sets $\lbrace (P_{g,\delta}u)(t)\vert u\in \Sigma\rbrace$ are relatively compact
in $X_{q}$. This comes from the following inequalities:
\begin{equation*}
\begin{split}
\Vert(Pu)&(t)-(P_{g,\delta}u)(t)\Vert_{q}\\
&\leq\biggl\Vert\int_{0}^{\delta}L^{-1}\zeta_{\alpha}(\theta)Q(t^{\alpha}\theta)LM^{-1}\left[
v_{0}+\frac{1}{\Gamma(1-\alpha)}\int_{0}^{t}\frac{[u_{0}+h(u(s))]}{(t-s)^{\alpha}}ds\right]d\theta\biggr\Vert_{q}\\
&\quad +\biggl\Vert\int_{\delta}^{\infty}L^{-1}\zeta_{\alpha}(\theta)Q(t^{\alpha}\theta)LM^{-1}\left[
v_{0}+\frac{1}{\Gamma(1-\alpha)}\int_{t-g}^{t}\frac{[u_{0}+h(u(s))]}{(t-s)^{\alpha}}ds\right]d\theta\biggr\Vert_{q}\\
&\quad +\biggl\Vert\int_{\delta}^{\infty}L^{-1}\zeta_{\alpha}(\theta)Q(t^{\alpha}\theta)LM^{-1}\left[
v_{0}+\frac{1}{\Gamma(1-\alpha)}\int_{0}^{t-g}\frac{[u_{0}+h(u(s))]}{(t-s)^{\alpha}}ds\right]d\theta\\
&\quad -\int_{\delta}^{\infty}L^{-1}\zeta_{\alpha}(\theta)Q(t^{\alpha}\theta)LM^{-1}\left[v_{0}
+\frac{1}{\Gamma(1-\alpha)}\int_{0}^{t-g}\frac{[u_{0}+h(u(s))]}{(t-s)^{\alpha}}ds\right]d\theta\biggr\Vert_{q}\\
&\quad +\alpha\left\Vert\int_{0}^{t}\int_{0}^{\delta}\theta(t-s)^{\alpha-1}
L^{-1}\zeta_{\alpha}(\theta)Q((t-s)^{\alpha}\theta)f(s, W(s))d\theta ds\right\Vert_{q}\\
&\quad +\alpha\biggl\Vert\int_{0}^{t}\int_{\delta}^{\infty}\theta(t-s)^{\alpha-1}
L^{-1}\zeta_{\alpha}(\theta)Q((t-s)^{\alpha}\theta)f(s, W(s))d\theta ds\\
&\quad -\int_{0}^{t-g}\int_{\delta}^{\infty}\theta(t-s)^{\alpha-1}L^{-1}\zeta_{\alpha}(\theta)
Q((t-s)^{\alpha}\theta)f(s, W(s))d\theta ds\biggr\Vert_{q}\\
&\leq\int_{0}^{\delta}\Vert L^{-1}\zeta_{\alpha}(\theta)Q(t^{\alpha}\theta)LM^{-1}
\Vert \biggl\Vert A^{q} \left[v_{0}+\frac{1}{\Gamma(1-\alpha)}
\int_{0}^{t}\frac{[u_{0}+h(u(s))]}{(t-s)^{\alpha}}ds\right]\biggr\Vert d\theta\\
&\quad +\int_{\delta}^{\infty}\Vert L^{-1}\zeta_{\alpha}(\theta)Q(t^{\alpha}\theta)LM^{-1}
\Vert \biggl\Vert A^{q}\left[v_{0}+\frac{1}{\Gamma(1-\alpha)}\int_{t-g}^{t}
\frac{[u_{0}+h(u(s))]}{(t-s)^{\alpha}}ds\right]\biggr\Vert d\theta\\
&\quad +\alpha\int_{0}^{t}\int_{0}^{\delta}\theta(t-s)^{\alpha-1}\Vert L^{-1}
\Vert\zeta_{\alpha}(\theta)\Vert A^{q}Q((t-s)^{\alpha}\theta)\Vert \Vert f(s, W(s))\Vert d\theta ds\\
&\quad +\alpha\int_{t-g}^{t}\int_{\delta}^{\infty}\theta(t-s)^{\alpha-1}
\Vert L^{-1}\Vert\zeta_{\alpha}(\theta)\Vert A^{q}Q((t-s)^{\alpha}\theta)\Vert \Vert f(s, W(s))\Vert d\theta ds
\end{split}
\end{equation*}
\begin{equation*}
\begin{split}
&\leq C_{1}C_{2}M_{0}\Vert L\Vert\left[\Vert v_{0}\Vert_{q}+(k_{2}
+\Vert u_{0}\Vert_{q})\frac{t^{1-\alpha}}{\Gamma(2-\alpha)}\right]\int_{0}^{\delta}\zeta_{\alpha}(\theta)d\theta\\
&\quad +C_{1}C_{2}M_{0}\Vert L\Vert \left[\Vert v_{0}\Vert_{q}+(k_{2}
+\Vert u_{0}\Vert_{q})\frac{g^{1-\alpha}}{\Gamma(2-\alpha)}\right]
\int_{\delta}^{\infty}\zeta_{\alpha}(\theta)d\theta\\
&\quad +C_{1}M_{q}\alpha\tau\int_{0}^{t}\int_{0}^{\delta}\theta(t-s)^{\alpha-1}
\zeta_{\alpha}(\theta)(t-s)^{-\alpha q}\theta^{-q}d\theta ds\\
&\quad +C_{1}M_{q}\alpha\tau\int_{t-g}^{t}\int_{\delta}^{\infty}\theta(t-s)^{\alpha-1}
\zeta_{\alpha}(\theta)(t-s)^{-\alpha q}\theta^{-q}d\theta ds\\
&\leq C_{1}C_{2}M_{0}\Vert L\Vert\left[\Vert v_{0}\Vert_{q}+(k_{2}
+\Vert u_{0}\Vert_{q})\frac{t^{1-\alpha}}{\Gamma(2-\alpha)}\right]
\int_{0}^{\delta}\zeta_{\alpha}(\theta)d\theta\\
&\quad + C_{1}C_{2}M_{0}\Vert L\Vert\left[\Vert v_{0}\Vert_{q}+(k_{2}
+\Vert u_{0}\Vert_{q})\frac{g^{1-\alpha}}{\Gamma(2-\alpha)}\right]\\
&\quad + C_{1}M_{q}\alpha\tau\int_{0}^{t}\int_{0}^{\delta}\theta^{1-q}(t-s)^{-\alpha q
+\alpha-1}\zeta_{\alpha}(\theta)d\theta ds\\
&\quad + C_{1}M_{q}\alpha\tau\int_{t-g}^{t}\int_{\delta}^{\infty}\theta^{1-q}(t-s)^{-\alpha q
+\alpha-1}\zeta_{\alpha}(\theta)d\theta ds\\
&\leq C_{1}C_{2}M_{0}\Vert L\Vert\left[\Vert v_{0}\Vert_{q}+(k_{2}
+\Vert u_{0}\Vert_{q})\frac{t^{1-\alpha}}{\Gamma(2-\alpha)}\right]
\int_{0}^{\delta}\zeta_{\alpha}(\theta)d\theta\\
&\quad + C_{1}C_{2}M_{0}\Vert L\Vert\left[\Vert v_{0}\Vert_{q}+(k_{2}
+\Vert u_{0}\Vert_{q})\frac{g^{1-\alpha}}{\Gamma(2-\alpha)}\right]\\
&\quad + C_{1}M_{q}\alpha\tau\left(\int_{0}^{t}(t-s)^{-\alpha q+\alpha-1}ds\right)
\int_{0}^{\delta}\theta^{1-q}\zeta_{\alpha}(\theta)d\theta\\
&\quad + C_{1}M_{q}\alpha\tau\frac{\Gamma(2-q)}{\Gamma(1+\alpha(1-q))}\left(
\int_{t-g}^{t}(t-s)^{-\alpha q+\alpha-1}ds\right)
\end{split}
\end{equation*}
and
$$
\int_{0}^{t}(t-s)^{-\alpha q+\alpha-1}ds\leq \frac{1}{\alpha(1-q)}t^{\alpha(1-q)},
~ \int_{t-g}^{t}(t-s)^{-\alpha q+\alpha-1}ds\leq\frac{1}{\alpha(1-q)}g^{\alpha(1-q)},
$$
so that
\begin{equation*}
\begin{split}
\Vert(Pu)(t)-(P_{g,\delta}u)(t)\Vert_{q}
\leq & C_{1}C_{2}M_{0}\Vert L\Vert\left[
\Vert v_{0}\Vert_{q}+(k_{2}+\Vert u_{0}\Vert_{q})\frac{a^{1-\alpha}}{\Gamma(2-\alpha)}\right]
\int_{0}^{\delta}\zeta_{\alpha}(\theta)d\theta\\
&+ C_{1}C_{2}M_{0}\Vert L\Vert\left[\Vert v_{0}\Vert_{q}+(k_{2}
+\Vert u_{0}\Vert_{q})\frac{g^{1-\alpha}}{\Gamma(2-\alpha)}\right]\\
&+ \frac{C_{1}M_{q}\alpha\tau}{\alpha(1-q)}a^{\alpha(1-q)}
\int_{0}^{\delta}\theta^{1-q}\zeta_{\alpha}(\theta)d\theta\\
&+ \frac{C_{1}M_{q}\alpha\tau\Gamma(2-q)}{\Gamma(1+\alpha(1-q))}
\frac{1}{\alpha(1-q)}g^{\alpha(1-q)}.
\end{split}
\end{equation*}
Therefore, $\Pi(t)=\lbrace (Pu)(t)\vert u\in \Sigma\rbrace$ is relatively compact
in $X_{q}$ for all $t\in (0, a]$ and, since it is compact at $t=0$,
we have relatively compactness in $X_{q}$ for all $t\in J$.

Next, let us prove that $\Pi=P\Sigma$ is equicontinuous. For $g\in [0, a)$,
\begin{equation*}
\begin{split}
\Vert(Pu)(g)-(Pu)(0)\Vert_{q}
\leq & C_{2}\Vert v_{0}\Vert_{q}\Vert S_{\alpha}(g)L-I\Vert_{q}\\
&+ C_{1}C_{2}M_{0}\Vert L\Vert \left[(k_{2}+\Vert u_{0}\Vert_{q})\frac{g^{1-\alpha}}{\Gamma(2-\alpha)}\right]\\
&+ \frac{\alpha C_{1}M_{q}\Gamma(2-q)}{\Gamma(1+\alpha(1-q))}\frac{\tau}{\alpha(1-q)}g^{\alpha(1-q)},
\end{split}
\end{equation*}
and for $0<s<t_{1}<t_{2}\leq a$,
$\Vert(Pu)(t_{1})-(Pu)(t_{2})\Vert_{q}\leq I_{1}+I_{2}+I_{3}+I_{4}+I_{5}+I_{6}$, where
\begin{eqnarray*}
I_{1}&=&C_{2}\Vert L\Vert \left[\Vert v_{0}\Vert_{q}+(k_{2}+\Vert u_{0}
\Vert_{q})\frac{t_{1}^{1-\alpha}}{\Gamma(2-\alpha)}\right]
\Vert S_{\alpha}(t_{1})-S_{\alpha}(t_{2})\Vert_{q},\\
I_{2}&=&C_{1}C_{2}M_{0}\Vert L\Vert \left[(k_{2}+\Vert u_{0}\Vert_{q})
\frac{(t_{2}-t_{1})^{1-\alpha}+t_{1}^{1-\alpha}-t_{2}^{1-\alpha}}{\Gamma(2-\alpha)}\right],\\
I_{3}&=&C_{1}C_{2}M_{0}\Vert L\Vert \left[(k_{2}+\Vert u_{0}\Vert_{q})
\frac{(t_{2}-t_{1})^{1-\alpha}}{\Gamma(2-\alpha)}\right],\\
I_{4}&=&\frac{\alpha C_{1}M_{q}\Gamma(2-q)}{\Gamma(1+\alpha(1-q))}
\Vert f\Vert_{C(J, X)}\sqrt{\frac{1}{2\alpha-1}}t_{1}^{\alpha-\frac{1}{2}}\left(
\int_{0}^{a}\vert (t_{1}-s)^{-q\alpha}-(t_{2}-s)^{-q\alpha}\vert^{2} ds\right)^{\frac{1}{2}},\\
I_{5}&=&\frac{\alpha C_{1}M_{q}\Gamma(2-q)}{\Gamma(1+\alpha(1-q))}\Vert f\Vert_{C(J, X)}\left(
\int_{0}^{a}\vert(t_{1}-s)^{\alpha-1}-(t_{2}-s)^{\alpha-1}\vert^{2} ds\right)^{\frac{1}{2}}\\
&&\times\sqrt{\frac{1}{1-2q\alpha}}\biggl(t_{2}^{1-2q\alpha}-(t_{2}-t_{1})^{1-2q\alpha}\biggr)^{\frac{1}{2}},\\
I_{6}&=&\frac{\alpha C_{1}M_{q}\Gamma(2-q)}{\Gamma(1+\alpha(1-q))}
\Vert f\Vert_{C(J, X)}\frac{1}{\alpha(1-q)}(t_{2}-t_{1})^{\alpha(1-q)}.
\end{eqnarray*}
Now, we have to verify that $I_{j}$, $j = 1, \ldots, 6$, tend to $0$
independently of $u\in \Sigma$ when $t_{2}\rightarrow t_{1}$. Let $u\in \Sigma$.
By Lemma~\ref{Lemma 2.2} (c) and (f), we deduce that
$\lim_{t_{2}\rightarrow t_{1}}I_{1}=0$ and $\lim_{t_{2}\rightarrow t_{1}}I_{4}=0$.
Moreover, using the fact that $\vert(t_{1}-s)^{\alpha-1}-(t_{2}-s)^{\alpha-1}\vert\rightarrow 0$
as $t_{2}\rightarrow t_{1}$, we obtain from Lemma~\ref{Lemma 2.3} that
$$
\int_{0}^{a}\vert(t_{1}-s)^{\alpha-1}-(t_{2}-s)^{\alpha-1}\vert^{2}ds
\rightarrow 0 ~\text{as} ~ t_{2}\rightarrow t_{1}.
$$
Thus, $\lim_{t_{2}\rightarrow t_{1}}I_{5}=0$ since $q\alpha<\frac{1}{2}$.
Also, it is clear that $\lim_{t_{2}\rightarrow t_{1}}I_{2}=I_{3}=I_{6}=0$.
In summary, we have proven that $P\Sigma$ is relatively compact
for $t\in J$ and $\Pi(t)=\lbrace Pu\vert u\in \Sigma\rbrace$
is a family of equicontinuous functions. Hence,
by the Arzela--Ascoli theorem, $P$ is compact.
\end{proof}

\begin{proof}[Proof of Theorem~\ref{Theorem 3.1}]
We shall prove that the operator $P$ has a fixed point in $\Omega_{q}$.
According to Leray--Schauder fixed point theory
(and from Lemmas~\ref{Lemma 3.1}--\ref{Lemma 3.3}),
it suffices to show that the set
$\Delta=\lbrace u\in \Omega_{q}\vert u=\beta Pu, \beta\in [0, 1]\rbrace$
is a bounded subset of $\Omega_{q}$. Let $u\in \Delta$. Then,
\begin{equation*}
\begin{split}
\Vert u(t)\Vert_{q}
&=\Vert\beta(Pu)(t)\Vert_{q}\\
&\leq \left\Vert S_{\alpha}(t)LM^{-1}\left[v_{0}+\frac{1}{\Gamma(1-\alpha)}
\int_{0}^{t}\frac{[u_{0}+h(u(s))]}{(t-s)^{\alpha}}ds\right]\right\Vert_{q}\\
&\quad +\int_{0}^{t}(t-s)^{\alpha-1}\Vert T_{\alpha}(t-s)f(s, W(s))\Vert_{q}ds\\
&\leq C_{1}C_{2}M_{0}\Vert L\Vert\left[\Vert v_{0}\Vert_{q}
+\frac{a^{1-\alpha}}{\Gamma(2-\alpha)}(k_{2}+\Vert u_{0}\Vert_{q})\right]\\
&\quad +\int_{0}^{t}(t-s)^{\alpha-1}\Vert A^{q}T_{\alpha}(t-s)\Vert \Vert f(s, W(s))\Vert ds\\
&\leq C_{1}C_{2}M_{0}\Vert L\Vert\left[\Vert v_{0}\Vert_{q}
+\frac{a^{1-\alpha}}{\Gamma(2-\alpha)}(k_{2}+\Vert u_{0}\Vert_{q})\right]\\
&\quad +\frac{a_{f}\alpha C_{1}M_{q}\Gamma(2-q)}{\Gamma(1+\alpha(1-q))}
\int_{0}^{t}(t-s)^{-q\alpha+\alpha-1}(1+r\Vert u\Vert_{q})ds\\
&\leq C_{1}C_{2}M_{0}\Vert L\Vert\left[\Vert v_{0}\Vert_{q}
+\frac{a^{1-\alpha}}{\Gamma(2-\alpha)}(k_{2}+\Vert u_{0}\Vert_{q})\right]\\
&\quad +\frac{a_{f}\alpha C_{1}M_{q}\Gamma(2-q)}{\Gamma(1+\alpha(1-q))}
\frac{a^{\alpha(1-q)}}{\alpha(1-q)}+\frac{a_{f}\alpha rC_{1}M_{q}
\Gamma(2-q)}{\Gamma(1+\alpha(1-q))}\int_{0}^{t}(t-s)^{-q\alpha+\alpha-1}\Vert u\Vert_{q}ds.
\end{split}
\end{equation*}
Based on the well known singular version of Gronwall inequality, we can deduce
that there exists a constant $R>0$ such that $\Vert u\Vert_{\infty}\leq R$.
Thus, $\Delta$ is a bounded subset of $\Omega_{q}$. By Leray--Schauder fixed
point theory, $P$ has a fixed point in $\Omega_{q}$. Consequently,
system \eqref{eq:1.1}--\eqref{eq:1.2} has at least one mild solution $u$ on $J$.
\end{proof}

\begin{theorem}
\label{Theorem 3.2}
Mild solution $u(\cdot)$ of system \eqref{eq:1.1}--\eqref{eq:1.2} is unique.
\end{theorem}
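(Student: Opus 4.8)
The plan is to establish uniqueness via a standard Gronwall-type argument applied to the difference of two mild solutions. Suppose $u$ and $\tilde u$ are both mild solutions of \eqref{eq:1.1}--\eqref{eq:1.2} on $J$, with $\Vert u\Vert_{\infty},\Vert\tilde u\Vert_{\infty}\leq\rho$ for some $\rho>0$. Using the integral representation from Definition~\ref{Definition 2.4}, the leading term $S_{\alpha}(t)LM^{-1}v_{0}$ cancels, and I would estimate
\begin{equation*}
\Vert u(t)-\tilde u(t)\Vert_{q}
\leq\left\Vert S_{\alpha}(t)LM^{-1}\frac{1}{\Gamma(1-\alpha)}\int_{0}^{t}(t-s)^{-\alpha}[h(u(s))-h(\tilde u(s))]ds\right\Vert_{q}
+\int_{0}^{t}(t-s)^{\alpha-1}\Vert T_{\alpha}(t-s)[f(s,W(s))-f(s,\tilde W(s))]\Vert_{q}ds.
\end{equation*}
This is essentially the same estimate carried out in the proof of Lemma~\ref{Lemma 3.2.}, so I would invoke it directly.

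Next I would bound each piece. For the first term, Lemma~\ref{Lemma 2.2}(a), ($H_3$), and the Lipschitz bound ($F_5$) give a contribution $\leq C_{1}C_{2}M_{0}\Vert L\Vert k_{1}\frac{a^{1-\alpha}}{\Gamma(2-\alpha)}\Vert u-\tilde u\Vert_{\infty}$; more carefully one keeps it under the integral sign to get $\leq \frac{C_{1}C_{2}M_{0}\Vert L\Vert k_{1}}{\Gamma(1-\alpha)}\int_{0}^{t}(t-s)^{-\alpha}\Vert u(s)-\tilde u(s)\Vert_{q}ds$. For the second term, Lemma~\ref{Lemma 2.2}(d) gives $\Vert A^{q}T_{\alpha}(t-s)\Vert\leq\frac{\alpha C_{1}M_{q}\Gamma(2-q)}{\Gamma(1+\alpha(1-q))}(t-s)^{-q\alpha}$ and hypothesis ($F_3$) gives $\Vert f(s,W(s))-f(s,\tilde W(s))\Vert\leq L_{f}(\rho)[m_{1}(s)+\cdots+m_{r}(s)]\Vert u(s)-\tilde u(s)\Vert_{q}$, so that term is $\leq\frac{\alpha C_{1}M_{q}\Gamma(2-q)L_{f}(\rho)}{\Gamma(1+\alpha(1-q))}\int_{0}^{t}(t-s)^{\alpha-1-q\alpha}\left(\sum_{i=1}^{r}m_{i}(s)\right)\Vert u(s)-\tilde u(s)\Vert_{q}ds$. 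Adding, setting $\phi(t)=\Vert u(t)-\tilde u(t)\Vert_{q}$, one arrives at an inequality of the form $\phi(t)\leq\int_{0}^{t}(t-s)^{\alpha-1}\kappa(s)\phi(s)ds$ for a suitable nonnegative function $\kappa\in L^{1}(J)$ (absorbing the $(t-s)^{-\alpha}$ and $(t-s)^{-q\alpha}$ factors, which is where $\alpha q<1$ and $\alpha>\frac12$ are used to keep the kernel integrable).

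Finally I would apply the singular (weakly singular) version of Gronwall's inequality — the same tool already cited in the proof of Theorem~\ref{Theorem 3.1} — to conclude $\phi(t)\equiv 0$ on $J$, i.e. $u=\tilde u$, proving uniqueness. The only mild subtlety, and the step I would be most careful about, is making the weakly singular kernel fit the hypotheses of the Gronwall lemma: the first term contributes a $(t-s)^{-\alpha}$ singularity while the second contributes $(t-s)^{\alpha-1-q\alpha}=(t-s)^{-(1-\alpha(1-q))-\,?}$; both exponents lie in $(-1,0)$ precisely because $0<\alpha\le 1$ and $\alpha q<1$, so a single application of the singular Gronwall inequality (possibly after combining the two kernels into one dominating weakly singular kernel $C(t-s)^{-\mu}$ with $\mu=\max\{\alpha,\,1-\alpha(1-q)\}<1$, times the $L^{1}$ factor $1+\sum_i m_i(s)$) closes the argument. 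No fixed point theorem is needed here — the contraction-type estimate plus Gronwall suffices.
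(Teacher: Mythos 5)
Your proposal is correct and follows essentially the same route as the paper: decompose the difference of two mild solutions, cancel the common initial term, apply the Lipschitz hypotheses ($F_5$) for $h$ and ($F_3$) for $f$ together with Lemma~\ref{Lemma 2.2}(d), and close with the singular Gronwall inequality. The only cosmetic difference is that the paper allows the two solutions to carry different data $v_0, u_0$ and thereby records a continuous-dependence bound $\Vert u(t)-u^{*}(t)\Vert_{q}\leq C_{1}C_{2}M_{0}\Vert L\Vert R^{*}\Vert u_{0}-u_{0}^{*}\Vert_{q}$ before specializing to uniqueness, while you set the data equal from the outset and conclude $\phi\equiv 0$ directly; your handling of the two weakly singular kernels is, if anything, more careful than the paper's.
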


\begin{proof}
Let $u^{*}(\cdot)$ be another mild solution of system \eqref{eq:1.1}--\eqref{eq:1.2}
with Sobolev--fractional nonlocal initial value
$M^{-1}\left[v_{0}+\frac{1}{\Gamma(1-\alpha)}\int_{0}^{t}\frac{[u_{0}+h(u(s))]}{(t-s)^{\alpha}}ds\right]$.
It is not difficult to verify that there exists a constant $\rho>0$ such that
$\Vert u\Vert_{q}, \Vert u^{*}\Vert_{q}\leq\rho$. From
\begin{multline*}
\Vert u(t)-u^{*}(t)\Vert_{q}
\leq \left\Vert S_{\alpha}(t)LM^{-1}
\left\lbrace[v_{0}-v^{*}_{0}]+\frac{1}{\Gamma(1-\alpha)}\int_{0}^{t}
\frac{[u_{0}-u^{*}_{0}]+[h(u)-h(u^{*})]}{(t-s)^{\alpha}}ds\right\rbrace\right\Vert_{q}\\
+\int_{0}^{t}(t-s)^{\alpha-1}\Vert T_{\alpha}(t-s)[f(s, W(s))-f(s, W^{*}(s))]\Vert_{q}ds,
\end{multline*}
we get
\begin{multline*}
\Vert u(t)-u^{*}\Vert_{q}
\leq C_{1}C_{2}M_{0}\Vert L\Vert\left\lbrace\Vert v_{0}
-v^{*}_{0}\Vert_{q}+\frac{1}{\Gamma(1-\alpha)}\int_{0}^{t}\frac{\Vert u_{0}-u^{*}_{0}\Vert_{q}
+k_{1}\Vert u(s)-u^{*}(s)\Vert_{q}}{(t-s)^{\alpha}}ds\right\rbrace\\
+ L_{f}(\rho)\sum\limits_{i=1}^{r}m_{i}(t)\frac{\alpha C_{1}M_{q}\Gamma(2-q)}{\Gamma(1+\alpha(1-q))}
\int_{0}^{t}(t-s)^{-q\alpha+\alpha-1}\Vert u(s)-u^{*}(s)\Vert_{q}ds.
\end{multline*}
Again, by the singular version of Gronwall's inequality,
there exists a constant $R^{*}>0$ such that
$$
\Vert u(t)-u^{*}(t)\Vert_{q}\leq C_{1}C_{2}M_{0}
\Vert L\Vert R^{*}\Vert u_{0}-u_{0}^{*}\Vert_{q},
$$
which gives the uniqueness of $u$. Thus, system \eqref{eq:1.1}--\eqref{eq:1.2}
has a unique mild solution on $J$.
\end{proof}


\section{Optimal multi-integral controls}
\label{sec:4}

Let $Z$ be another separable reflexive Banach space from which the controls
$\mathfrak{u}_{1},\ldots,\mathfrak{u}_{k}$ take their values. We denote by
$V_{f}(Z)$ a class of nonempty closed and convex subsets of $Z$. The multifunction
$\omega: J\rightarrow V_{f}(Z)$ is measurable, $\omega(\cdot)\subset \Lambda$,
where $\Lambda$ is a bounded set of $Z$. The admissible control set
is $U_{ad}=S^{p}_{\omega}=\lbrace \mathfrak{u}_{j}\in L^{p}(\Lambda)
\vert \mathfrak{u}_{j}(t)\in \omega(t)~ a.e. \rbrace$,
$j=\overline{1,k}$, $1<p<\infty$. Then, $U_{ad}\neq \emptyset$ \cite{AMA.39}.

Consider the following Sobolev type fractional nonlocal multi-integral-controlled system:
\begin{equation}
\label{eq:4.1}
^CD^{\alpha}_{t}[Lu(t)]=Eu(t)+f(t, W(t))+\int_{0}^{t}[\mathcal{B}_{1}\mathfrak{u}_{1}(s)
+\cdots+\mathcal{B}_{k}\mathfrak{u}_{k}(s)]ds,
\end{equation}
\begin{equation}
\label{eq:4.2}
^{L}D^{1-\alpha}_{t}[Mu(0)]=u_{0}+h(u(t)).
\end{equation}
Besides the sufficient conditions ($F_1$)--($F_5$) of the last section, we assume:
\begin{itemize}
\item[($F_6$)] $ \mathcal{B}_{j}\in L^{\infty}(J, L(Z, X_{q}))$, which implies that
$\mathcal{B}_{j}\mathfrak{u}_{j}\in L^{p}(J, X_{q})$ for all $\mathfrak{u}_{j}\in U_{ad}$.
\end{itemize}

\begin{corollary}
\label{Theorem 4.1}
In addition to assumptions of Theorem~\ref{Theorem 3.1},
suppose ($F_6$) holds. For every $\mathfrak{u}_{j}\in U_{ad}$
and $p\alpha(1-q)>1$, system \eqref{eq:4.1}--\eqref{eq:4.2}
has a mild solution corresponding to $\mathfrak{u}_{j}$ given by
\begin{multline*}
u^{\mathfrak{u}_{j}}(t)
= S_{\alpha}(t)LM^{-1}\left[v_{0}+\frac{1}{\Gamma(1-\alpha)}
\int_{0}^{t}\frac{[u_{0}+h(u(s))]}{(t-s)^{\alpha}}ds\right]\\
+\int_{0}^{t}(t-s)^{\alpha-1}T_{\alpha}(t-s)\left[f(s, W(s))
+\int_{0}^{s}[\mathcal{B}_{1}\mathfrak{u}_{1}(\eta)
+\cdots+\mathcal{B}_{k}\mathfrak{u}_{k}(\eta)]d\eta\right]ds.
\end{multline*}
\end{corollary}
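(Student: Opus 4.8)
The plan is to reduce Corollary~\ref{Theorem 4.1} to Theorem~\ref{Theorem 3.1} by absorbing the multi-integral control term into the nonlinearity, and then checking that the enlarged forcing term still meets the hypotheses used in the fixed-point argument. First I would rewrite the controlled system \eqref{eq:4.1}--\eqref{eq:4.2} in the form \eqref{eq:1.1}--\eqref{eq:1.2} by setting
\[
\widetilde{f}(t, W(t)) = f(t, W(t)) + \int_{0}^{t}\Big[\mathcal{B}_{1}\mathfrak{u}_{1}(\eta) + \cdots + \mathcal{B}_{k}\mathfrak{u}_{k}(\eta)\Big]d\eta,
\]
so that \eqref{eq:4.1} reads $^{C}D^{\alpha}_{t}[Lu(t)] = Eu(t) + \widetilde{f}(t, W(t))$. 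For fixed admissible controls $\mathfrak{u}_{1},\ldots,\mathfrak{u}_{k}\in U_{ad}$, the extra term $g(t):=\int_{0}^{t}\sum_{j}\mathcal{B}_{j}\mathfrak{u}_{j}(\eta)d\eta$ does not depend on $u$, so it affects only the growth and measurability conditions ($F_2$)--($F_4$), never the Lipschitz condition ($F_3$) in the $u$-variable (the constants $k_{1}$, $k_{2}$, $L_{f}(\rho)$ are unchanged). Thus the continuity and compactness arguments of Lemmas~\ref{Lemma 3.2.}--\ref{Lemma 3.3} carry over verbatim once we know $g\in C(J, X_{q})$ and is bounded there.

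The key estimate to establish is that $g$ is a bounded continuous $X_{q}$-valued function. By ($F_6$), each $\mathcal{B}_{j}\mathfrak{u}_{j}\in L^{p}(J, X_{q})$; hence $\sum_{j}\mathcal{B}_{j}\mathfrak{u}_{j}\in L^{p}(J, X_{q})$. Writing $\phi := \sum_{j}\mathcal{B}_{j}\mathfrak{u}_{j}$, for $0\le t_{1}<t_{2}\le a$ and using H\"older's inequality with exponents $p$ and $p'=p/(p-1)$,
\[
\Vert g(t_{2}) - g(t_{1})\Vert_{q} \le \int_{t_{1}}^{t_{2}} \Vert \phi(\eta)\Vert_{q}\, d\eta \le \Vert \phi\Vert_{L^{p}(J, X_{q})}\,(t_{2}-t_{1})^{1/p'},
\]
which tends to $0$ as $t_{2}\to t_{1}$, so $g\in C(J, X_{q})$; taking $t_{1}=0$ gives the bound $\Vert g\Vert_{\infty}\le \Vert\phi\Vert_{L^{p}(J, X_{q})} a^{1/p'}$. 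Here the hypothesis $p\alpha(1-q)>1$ is what one needs when the control term is instead left inside the $T_{\alpha}$-convolution and one wants the integral $\int_{0}^{t}(t-s)^{\alpha-1}T_{\alpha}(t-s)\phi(s)\,ds$ to be well defined and continuous: by Lemma~\ref{Lemma 2.2}(d), $\Vert A^{q}T_{\alpha}(t-s)\Vert \le \mathrm{const}\,(t-s)^{-q\alpha}$, so the kernel behaves like $(t-s)^{\alpha-1-q\alpha}$ and H\"older against an $L^{p}$ control requires $(\alpha-1-q\alpha)p' > -1$, i.e. $p'(1-\alpha+q\alpha)<1$, equivalently $p\alpha(1-q)>1$. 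I would record this as the finiteness of the relevant Bochner integral, invoking Lemma~\ref{Lemma 2.4}.

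With $g$ shown to lie in $C(J, X_{q})$ and to be bounded, $\widetilde{f} = f + g$ satisfies ($F_2$)--($F_4$) with updated constants (e.g. $a_{\widetilde{f}} := a_{f} + \Vert\phi\Vert_{L^{p}(J, X_{q})} a^{1/p'}$ works in ($F_4$) after adjusting for the additive constant), while ($F_1$), ($F_5$) are untouched. Applying Theorem~\ref{Theorem 3.1} to the system with nonlinearity $\widetilde{f}$ yields a mild solution $u^{\mathfrak{u}_{j}}\in\Omega_{q}$ satisfying the integral equation of Definition~\ref{Definition 2.4} with $f$ replaced by $\widetilde{f}$; expanding $\widetilde{f}$ back out gives exactly the displayed formula for $u^{\mathfrak{u}_{j}}(t)$. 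The main obstacle is really just the bookkeeping around the exponent condition $p\alpha(1-q)>1$ and making sure the singular kernel $(t-s)^{\alpha-1}$ combined with $\Vert A^{q}T_{\alpha}(t-s)\Vert\lesssim(t-s)^{-q\alpha}$ is integrable against an $L^{p}$ control — everything else is a direct appeal to the already-proved Theorem~\ref{Theorem 3.1}.
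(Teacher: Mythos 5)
Your proposal is correct and follows essentially the same route as the paper: both reduce the controlled system to Theorem~\ref{Theorem 3.1} by verifying that the multi-integral control contribution is a well-defined, bounded element of $\Omega_{q}$, using Lemma~\ref{Lemma 2.2}(d), H\"older's inequality with the kernel $(t-s)^{\alpha-1-q\alpha}$ against the $L^{p}$ controls (which is exactly where $p\alpha(1-q)>1$ enters), and Lemma~\ref{Lemma 2.4} for Bochner integrability. Your preliminary observation that $g(t)=\int_{0}^{t}\sum_{j}\mathcal{B}_{j}\mathfrak{u}_{j}(\eta)\,d\eta$ is itself continuous and uniformly bounded in $X_{q}$ is a slightly cleaner way to see that ($F_2$)--($F_4$) survive with updated constants, but the substance of the argument is the same as the paper's.
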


\begin{proof}
Based on our existence result (Theorem~\ref{Theorem 3.1}), it is required to check
the term containing multi-integral controls. Let us consider
$$
\varphi(t)=\int_{0}^{t}(t-s)^{\alpha-1}T_{\alpha}(t-s)\left[
\int_{0}^{s}[\mathcal{B}_{1}\mathfrak{u}_{1}(\eta)
+\cdots+\mathcal{B}_{k}\mathfrak{u}_{k}(\eta)]d\eta\right]ds.
$$
Using Lemma~\ref{Lemma 2.2} (d) and H\"older inequality, we have
\begin{eqnarray*}
\Vert\varphi(t)\Vert_{q}&\leq&\left\Vert\int_{0}^{t}(t-s)^{\alpha-1}
T_{\alpha}(t-s)\int_{0}^{s}[\mathcal{B}_{1}\mathfrak{u}_{1}(\eta)
+\cdots+\mathcal{B}_{k}\mathfrak{u}_{k}(\eta)]d\eta ds\right\Vert_{q}\\
&\leq&\int_{0}^{t}(t-s)^{\alpha-1}\Vert A^{q}T_{\alpha}(t-s)
\Vert[\Vert\mathcal{B}_{1}\mathfrak{u}_{1}(s)\Vert a
+\cdots+\Vert\mathcal{B}_{k}\mathfrak{u}_{k}(s)\Vert a]ds\\
&\leq&\frac{\alpha aC_{1}M_{q}\Gamma(2-q)}{
\Gamma(1+\alpha(1-q))}\biggl[\Vert\mathcal{B}_{1}\Vert_{\infty}
\int_{0}^{t}(t-s)^{-q\alpha+\alpha-1}\Vert\mathfrak{u}_{1}(s)\Vert_{Z}ds\\
&&+\cdots+\Vert\mathcal{B}_{k}\Vert_{\infty}
\int_{0}^{t}(t-s)^{-q\alpha+\alpha-1}\Vert\mathfrak{u}_{k}(s)\Vert_{Z}ds\biggr]\\
&\leq&\frac{\alpha aC_{1}M_{q}\Gamma(2-q)}{\Gamma(1+\alpha(1-q))}\biggl[\Vert\mathcal{B}_{1}\Vert_{\infty}
\left(\int_{0}^{t}(t-s)^{\frac{p}{p-1}(-q\alpha+\alpha-1)}ds\right)^{\frac{p-1}{p}}\left(
\int_{0}^{t}\Vert\mathfrak{u}_{1}(s)\Vert^{p}_{Z}ds\right)^{\frac{1}{p}}\\
&&+\cdots+\Vert\mathcal{B}_{k}\Vert_{\infty}
\left(\int_{0}^{t}(t-s)^{\frac{p}{p-1}(-q\alpha+\alpha-1)}ds\right)^{\frac{p-1}{p}}\left(
\int_{0}^{t}\Vert\mathfrak{u}_{k}(s)\Vert^{p}_{Z}ds\right)^{\frac{1}{p}}\biggr]\\
&\leq&\frac{\alpha aC_{1}M_{q}\Gamma(2-q)}{\Gamma(1+\alpha(1-q))}\biggl[
\Vert\mathcal{B}_{1}\Vert_{\infty}\left(\frac{p-1}{p\alpha(1-q)-1}\right)^{\frac{p-1}{p}}
a^{\frac{p\alpha(1-q)-1}{p-1}}\Vert\mathfrak{u}_{1}\Vert_{L^{p}(J, Z)}\\
&&+\cdots+\Vert\mathcal{B}_{k}\Vert_{\infty}\left(\frac{p-1}{p\alpha(1-q)-1}\right)^{\frac{p-1}{p}}
a^{\frac{p\alpha(1-q)-1}{p-1}}\Vert\mathfrak{u}_{k}\Vert_{L^{p}(J, Z)}\biggr],
\end{eqnarray*}
where $\Vert\mathcal{B}_{1}\Vert_{\infty},\ldots,\Vert\mathcal{B}_{k}\Vert_{\infty}$
are the norm of operators $\mathcal{B}_{1},\ldots,\mathcal{B}_{k}$,
respectively, in the Banach space $L_{\infty}(J, L(Z, X_{q}))$. Thus,
$$
\left\Vert(t-s)^{\alpha-1}T_{\alpha}(t-s)\int_{0}^{s}[\mathcal{B}_{1}\mathfrak{u}_{1}(\eta)
+\cdots+\mathcal{B}_{k}\mathfrak{u}_{k}(\eta)]d\eta\right\Vert_{q}
$$
is Lebesgue integrable with respect to $s\in[0, t]$ for all $t\in J$.
It follows from Lemma~\ref{Lemma 2.4} that
$$
(t-s)^{\alpha-1}T_{\alpha}(t-s)\int_{0}^{s}[\mathcal{B}_{1}\mathfrak{u}_{1}(\eta)
+\cdots+\mathcal{B}_{k}\mathfrak{u}_{k}(\eta)]d\eta
$$
is a Bochner integral with respect to $s\in[0, t]$ for all $t\in J$.
Hence, $\varphi(\cdot)\in \Omega_{q}$.
The required result follows from Theorem~\ref{Theorem 3.1}.
\end{proof}

Furthermore, let us now assume
\begin{itemize}
\item[($F_7$)] The functional $\mathcal{L}: J\times X_{q}
\times Z^{k}\rightarrow\mathbb{R}\cup\lbrace\infty\rbrace$ is Borel measurable.
\item[($F_8$)] $\mathcal{L}(t,\cdot,\ldots,\cdot)$ is sequentially lower semicontinuous
on $X_{q}\times Z^{k}$ for almost all $t\in J$.
\item[($F_9$)]$\mathcal{L}(t, u,\cdot,\ldots,\cdot)$ is convex on $Z^{k}$
for each $u\in X_{q}$ and almost all $t\in J$.
\item[($F_{10}$)] There exist constants $d\geq 0$, $c_{1}, \ldots, c_{k}>0$,
such that $\psi$ is nonnegative and $\psi\in L^{1}(J, \mathbb{R})$ satisfies
$$
\mathcal{L}(t, u,\mathfrak{u}_{1},\ldots,\mathfrak{u}_{k})\geq\psi(t)
+d\Vert u\Vert_{q}+c_{1}\Vert\mathfrak{u}_{1}\Vert^{p}_{Z}
+\cdots+c_{k}\Vert\mathfrak{u}_{k}\Vert^{p}_{Z}.
$$
\end{itemize}
We consider the following Lagrange optimal control problem:
\begin{equation}
\label{LP}
\tag{$LP$}
\left\{
\begin{array}{ll}
\text{Find}~(u^{0}, \mathfrak{u}^{0}_{1},\ldots,
\mathfrak{u}^{0}_{k})\in C(J, X_{q})\times U^{k}_{ad} \\
\text{such that}~ \mathcal{J}(u^{0}, \mathfrak{u}^{0}_{1},\ldots,\mathfrak{u}^{0}_{k})
\leq\mathcal{J}(u^{\mathfrak{u}_{1},\ldots,\mathfrak{u}_{k}},
\mathfrak{u}_{1},\ldots,\mathfrak{u}_{k})~ \text{for all}~ \mathfrak{u}_{j}\in U_{ad},
\end{array}\right.
\end{equation}
where
$$
\mathcal{J}(u^{\mathfrak{u}_{1},\ldots,\mathfrak{u}_{k}}, \mathfrak{u}_{1},
\ldots,\mathfrak{u}_{k})=\int_{0}^{a}\mathcal{L}(t, u^{\mathfrak{u}_{1},
\ldots,\mathfrak{u}_{k}},\mathfrak{u}_{1}(t),\ldots,\mathfrak{u}_{k}(t))dt
$$
with $u^{\mathfrak{u}_{j}}$ denoting the mild solution of system \eqref{eq:4.1}--\eqref{eq:4.2}
corresponding to the multi-integral controls $\mathfrak{u}_{j}\in U_{ad}$.
The following lemma is used to obtain existence
of a fractional optimal multi-integral control (Theorem~\ref{Theorem 4.2}).

\begin{lemma}
\label{Lemma 4.1}
Operators $\Upsilon_{j}: L^{p}(J, Z)\rightarrow \Omega_{q}$ given by
$$
\begin{cases}
(\Upsilon_{1}\mathfrak{u}_{1})(\cdot)=\int_{0}^{\cdot}
\int_{0}^{s}T_{\alpha}(\cdot-s)\mathcal{B}_{1}\mathfrak{u}_{1}(\eta)d\eta ds,\\
\qquad \vdots\\
(\Upsilon_{1}\mathfrak{u}_{k})(\cdot)=\int_{0}^{\cdot}
\int_{0}^{s}T_{\alpha}(\cdot-s)\mathcal{B}_{k}\mathfrak{u}_{k}(\eta)d\eta ds,
\end{cases}
$$
where $p\alpha(1-q)>1$ and $j=\overline{1,k}$, are strongly continuous.
\end{lemma}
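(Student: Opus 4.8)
The plan is to show strong continuity of each operator $\Upsilon_j$ by combining a uniform bound (so that $\Upsilon_j$ maps into $\Omega_q$) with the standard argument that $\Upsilon_j\mathfrak{u}_j\in C(J,X_q)$ whenever $\mathfrak{u}_j\in L^p(J,Z)$, and then passing to the limit along sequences $\mathfrak{u}_j^n\to\mathfrak{u}_j$ in $L^p(J,Z)$. Since the $k$ operators are of identical form, it suffices to treat a single $\Upsilon:=\Upsilon_1$ and write $\mathcal{B}:=\mathcal{B}_1$, $\mathfrak{u}:=\mathfrak{u}_1$.

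First I would establish the pointwise estimate. For $t\in J$, using Lemma~\ref{Lemma 2.2} (d) to bound $\Vert A^q T_\alpha(t-s)\Vert\le \frac{\alpha C_1 M_q\Gamma(2-q)}{\Gamma(1+\alpha(1-q))}(t-s)^{-q\alpha}$, then $\Vert\mathcal{B}\mathfrak{u}(\eta)\Vert_q\le \Vert\mathcal{B}\Vert_\infty\Vert\mathfrak{u}(\eta)\Vert_Z$, and H\"older's inequality on $\int_0^t(t-s)^{\alpha-1-q\alpha}\cdot(\cdot)\,ds$ with exponents $\frac{p}{p-1}$ and $p$, one obtains exactly as in the proof of Corollary~\ref{Theorem 4.1} that
\begin{equation*}
\Vert(\Upsilon\mathfrak{u})(t)\Vert_q\le \frac{\alpha a C_1 M_q\Gamma(2-q)}{\Gamma(1+\alpha(1-q))}\Vert\mathcal{B}\Vert_\infty\left(\frac{p-1}{p\alpha(1-q)-1}\right)^{\frac{p-1}{p}}a^{\frac{p\alpha(1-q)-1}{p-1}}\Vert\mathfrak{u}\Vert_{L^p(J,Z)},
\end{equation*}
which requires the hypothesis $p\alpha(1-q)>1$ for the $s$-integral to be finite. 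This shows $\Upsilon$ is a bounded linear operator in the obvious sense, and in particular the integrand is Bochner integrable by Lemma~\ref{Lemma 2.4}.

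Next I would verify $\Upsilon\mathfrak{u}\in C(J,X_q)$, i.e. $(\Upsilon\mathfrak{u})(t)\in\Omega_q$. For $0\le t_1<t_2\le a$ I would split $(\Upsilon\mathfrak{u})(t_2)-(\Upsilon\mathfrak{u})(t_1)$ in the now-familiar way: a term with $\int_0^{t_1}[(t_1-s)^{\alpha-1}-(t_2-s)^{\alpha-1}]T_\alpha(t_2-s)(\cdots)\,ds$, a term with $\int_0^{t_1}(t_1-s)^{\alpha-1}[T_\alpha(t_1-s)-T_\alpha(t_2-s)](\cdots)\,ds$ handled by the uniform continuity in Lemma~\ref{Lemma 2.2} (f), and the tail $\int_{t_1}^{t_2}(t_2-s)^{\alpha-1}T_\alpha(t_2-s)(\cdots)\,ds$; each of these is estimated by H\"older together with Lemma~\ref{Lemma 2.2} (d) and Lemma~\ref{Lemma 2.3}, and tends to $0$ as $t_2\to t_1$. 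This is essentially the computation already carried out in Lemma~\ref{Lemma 3.1} and in the equicontinuity part of Lemma~\ref{Lemma 3.3}, so I would only sketch it. Finally, for strong continuity proper: given $\mathfrak{u}^n\to\mathfrak{u}$ in $L^p(J,Z)$, linearity gives $(\Upsilon\mathfrak{u}^n)(t)-(\Upsilon\mathfrak{u})(t)=(\Upsilon(\mathfrak{u}^n-\mathfrak{u}))(t)$, and the uniform bound above yields $\Vert\Upsilon\mathfrak{u}^n-\Upsilon\mathfrak{u}\Vert_\infty\le K\Vert\mathfrak{u}^n-\mathfrak{u}\Vert_{L^p(J,Z)}\to 0$, so $\Upsilon$ is (Lipschitz) continuous from $L^p(J,Z)$ to $\Omega_q$; applying this to each index $j=\overline{1,k}$ completes the proof.

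The main obstacle is not conceptual but bookkeeping: one must keep careful track of the constraint $p\alpha(1-q)>1$ (which is what makes $\int_0^t(t-s)^{\frac{p}{p-1}(\alpha-1-q\alpha)}ds$ converge), and in the continuity-in-$t$ step the delicate point is that the kernel $(t-s)^{\alpha-1}$ is itself singular, so the splitting into three integrals and the use of Lemma~\ref{Lemma 2.3} (with $\psi(s)=(a-s)_+^{\alpha-1}$ extended by zero, appropriately interpreted) is essential rather than cosmetic. Since all the required estimates have already appeared in Corollary~\ref{Theorem 4.1} and Lemmas~\ref{Lemma 3.1} and~\ref{Lemma 3.3}, the proof can be kept short by citing those computations.
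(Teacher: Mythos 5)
There is a genuine gap, and it comes from misreading what ``strongly continuous'' means here. You interpret it as norm-to-norm continuity and conclude with the estimate $\Vert\Upsilon\mathfrak{u}^{n}-\Upsilon\mathfrak{u}\Vert_{\infty}\leq K\Vert\mathfrak{u}^{n}-\mathfrak{u}\Vert_{L^{p}(J,Z)}$, i.e.\ that $\Upsilon_{j}$ is a bounded linear operator. That statement is true but is not what the lemma is for. In the proof of Theorem~\ref{Theorem 4.2} the minimizing controls only satisfy $\mathfrak{u}^{m}_{j}\rightharpoonup\mathfrak{u}^{0}_{j}$ \emph{weakly} in $L^{p}(J,Z)$ (the subsequence is extracted by reflexivity from a bounded sequence), and the lemma must deliver $\xi^{(j+2)}_{m}\rightarrow 0$ \emph{in norm}. ``Strongly continuous'' in the sense of the cited reference (Hu--Papageorgiou, p.~597) means precisely that weakly convergent sequences are mapped to norm-convergent ones; a merely bounded linear operator does not do this, since $\Vert\mathfrak{u}^{m}_{j}-\mathfrak{u}^{0}_{j}\Vert_{L^{p}}$ need not tend to zero under weak convergence. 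So your final step proves the wrong property, and the application in Theorem~\ref{Theorem 4.2} would collapse.

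The missing idea is \emph{compactness} of $\Upsilon_{j}$, which is what the paper actually establishes: for a bounded sequence $\lbrace\mathfrak{u}^{n}_{j}\rbrace$ in $L^{p}(J,Z)$, the images $\Theta_{j,n}=\Upsilon_{j}\mathfrak{u}^{n}_{j}$ are uniformly bounded (your first estimate, as in Corollary~\ref{Theorem 4.1}), equicontinuous (your second estimate), \emph{and} for each fixed $t$ the set $\lbrace\Theta_{j,n}(t)\rbrace_{n}$ is relatively compact in $X_{q}$ --- this last point uses the compactness of $T_{\alpha}(t)$ for $t>0$ from Lemma~\ref{Lemma 2.2}\,(c) (equivalently the compactness of $Q(t)$, as in the $P_{g,\delta}$ approximation of Lemma~\ref{Lemma 3.3}) and appears nowhere in your argument. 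Ascoli--Arzel\`a then gives relative compactness of $\lbrace\Theta_{j,n}\rbrace$ in $\Omega_{q}$, and a compact linear operator from a reflexive space is weak-to-strong continuous, which is the assertion of Lemma~\ref{Lemma 4.1}. Your uniform bound and $t$-continuity computations are correct and reusable as the boundedness and equicontinuity ingredients, but without the pointwise relative compactness step and the compactness-implies-weak-to-strong conclusion, the proof does not reach the statement being claimed.
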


\begin{proof}
Suppose that $\lbrace\mathfrak{u}^{n}_{j}\rbrace_{j=\overline{1,k}}\subseteq L^{p}(J, Z)$ are bounded.
Define $\Theta_{j, n}(t)=(\Upsilon_{j}\mathfrak{u}_{j}^{n})(t)$, $t\in J$.
Similarly to the proof of Corollary~\ref{Theorem 4.1}, we can conclude that for any fixed
$t\in J$ and $p\alpha(1-q)>1$, $\Vert\Theta_{j, n}(t)\Vert_{q}, j=\overline{1,k}$, are bounded.
By Lemma~\ref{Lemma 2.2}, it is easy to verify that $\Theta_{j, n}(t)$, $j=\overline{1,k}$,
are compact in $X_{q}$ and are also equicontinuous. According to the Ascoli--Arzela theorem,
$\lbrace\Theta_{j, n}(t)\rbrace$ are relatively compact in $\Omega_{q}$. Clearly,
$\Upsilon_{j}$, $j=\overline{1,k}$, are linear and continuous. Hence,
$\Upsilon_{j}$ are strongly continuous operators (see \cite[p. 597]{AMA.39}).
\end{proof}

Now we are in position to give the following result on existence
of optimal multi-integral controls for the Lagrange problem \eqref{LP}.

\begin{theorem}
\label{Theorem 4.2}
If the assumptions ($F_{1}$)--($F_{10}$) hold, then the Lagrange problem
\eqref{LP} admits at least one optimal multi-integral pair.
\end{theorem}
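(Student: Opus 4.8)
The plan is to follow the standard direct-method argument for existence of optimal controls, adapted to the multi-integral setting. First I would dispose of the trivial case: if $\mathcal{J}(u^{\mathfrak{u}_1,\ldots,\mathfrak{u}_k},\mathfrak{u}_1,\ldots,\mathfrak{u}_k)=+\infty$ for every admissible tuple, there is nothing to prove, so assume the infimum $m:=\inf\mathcal{J}$ is finite. By ($F_{10}$) and nonnegativity of $\psi$, we have $m\geq \int_0^a\psi(t)\,dt>-\infty$, so $m$ is a genuine real number. Pick a minimizing sequence $\{(u^n,\mathfrak{u}_1^n,\ldots,\mathfrak{u}_k^n)\}$ of admissible pairs, where $u^n=u^{\mathfrak{u}_j^n}$ is the mild solution from Corollary~\ref{Theorem 4.1}, so that $\mathcal{J}(u^n,\mathfrak{u}_1^n,\ldots,\mathfrak{u}_k^n)\to m$.

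Next I would extract convergent subsequences. Since $U_{ad}=S^p_\omega$ is bounded, closed and convex in the reflexive space $L^p(J,Z)$, it is weakly sequentially compact; hence, passing to a subsequence (not relabelled), $\mathfrak{u}_j^n\rightharpoonup \mathfrak{u}_j^0$ weakly in $L^p(J,Z)$ for each $j=\overline{1,k}$, and by Mazur's theorem $\mathfrak{u}_j^0\in U_{ad}$. Then I would pass to the limit in the mild-solution formula. Write $u^n = S_\alpha(\cdot)LM^{-1}[v_0+\Gamma(1-\alpha)^{-1}\int_0^\cdot (\cdot-s)^{-\alpha}[u_0+h(u^n(s))]\,ds] + \Phi(f(\cdot,W^n(\cdot))) + \sum_{j=1}^k (\Upsilon_j\mathfrak{u}_j^n)(\cdot)$, where $\Phi$ denotes the convolution-type operator with kernel $(t-s)^{\alpha-1}T_\alpha(t-s)$ and $\Upsilon_j$ is the operator of Lemma~\ref{Lemma 4.1}. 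By the a priori bound obtained in the proof of Theorem~\ref{Theorem 3.1} (the singular-Gronwall estimate), $\{u^n\}$ is bounded in $\Omega_q$; combining compactness of $S_\alpha(t)$, $T_\alpha(t)$ (Lemma~\ref{Lemma 2.2}(c)) with the equicontinuity estimates already established in Lemma~\ref{Lemma 3.3}, the sequence $\{u^n\}$ is relatively compact in $\Omega_q=C(J,X_q)$, so along a further subsequence $u^n\to u^0$ in $\Omega_q$. Because $\Upsilon_j$ is strongly continuous (Lemma~\ref{Lemma 4.1}), $\Upsilon_j\mathfrak{u}_j^n\to\Upsilon_j\mathfrak{u}_j^0$ in $\Omega_q$; because $h$ is Lipschitz ($F_5$) and $f$ is locally Lipschitz in its second argument ($F_3$) with the $B_i(t)$ continuous, the nonlocal term and $\Phi(f(\cdot,W^n))$ converge to the corresponding terms with $u^0$. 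Taking $n\to\infty$ identifies $u^0$ as the mild solution of \eqref{eq:4.1}--\eqref{eq:4.2} corresponding to $(\mathfrak{u}_1^0,\ldots,\mathfrak{u}_k^0)$, i.e. $u^0=u^{\mathfrak{u}_j^0}$.

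Finally I would invoke lower semicontinuity of the cost. By ($F_7$)--($F_9$), the integrand $\mathcal{L}$ is Borel measurable, sequentially lower semicontinuous in $(u,\mathfrak{u}_1,\ldots,\mathfrak{u}_k)$, and convex in the control variables, and by ($F_{10}$) it is bounded below by an $L^1$ function plus nonnegative terms; this is exactly the setting of Balder's theorem (or the classical Tonelli–Ioffe lower semicontinuity result) guaranteeing that $(u,\mathfrak{u}_1,\ldots,\mathfrak{u}_k)\mapsto\int_0^a\mathcal{L}(t,u,\mathfrak{u}_1(t),\ldots,\mathfrak{u}_k(t))\,dt$ is sequentially lower semicontinuous with respect to strong convergence of $u^n\to u^0$ in $C(J,X_q)$ and weak convergence $\mathfrak{u}_j^n\rightharpoonup\mathfrak{u}_j^0$ in $L^p(J,Z)$. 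Therefore
$$
m\le\mathcal{J}(u^0,\mathfrak{u}_1^0,\ldots,\mathfrak{u}_k^0)\le\liminf_{n\to\infty}\mathcal{J}(u^n,\mathfrak{u}_1^n,\ldots,\mathfrak{u}_k^n)=m,
$$
so $(u^0,\mathfrak{u}_1^0,\ldots,\mathfrak{u}_k^0)$ is an optimal pair, proving the theorem.

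I expect the main obstacle to be the passage to the limit in the state equation: one must justify that $u^n\to u^0$ strongly in $C(J,X_q)$ (not merely weakly), which requires carefully reusing the compactness and equicontinuity estimates of Lemmas~\ref{Lemma 3.1}--\ref{Lemma 3.3} applied to the perturbed system \eqref{eq:4.1}--\eqref{eq:4.2}, and that the weak-to-strong gain is genuinely available because the controls enter only through the compact operators $\Upsilon_j$. The convergence of the nonlinear term $f(\cdot,W^n(\cdot))$ needs the continuity of $B_i(\cdot)$ from $X_q$ to $Y$ together with the local Lipschitz bound ($F_3$) and the uniform bound ($F_4$); dominated convergence then handles the integral. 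The lower semicontinuity step, while conceptually the heart of the "optimality" conclusion, is a black-box application of a standard theorem once the structural hypotheses ($F_7$)--($F_{10}$) are in place.
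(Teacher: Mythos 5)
Your proposal is correct, and its skeleton --- minimizing sequence, weak sequential compactness of $U_{ad}$ in the reflexive space $L^{p}(J, Z)$ plus Mazur's lemma, strong convergence of the states, and Balder's theorem for lower semicontinuity of the cost --- coincides with the paper's. The one place where you genuinely diverge is the step establishing $u^{n}\rightarrow u^{0}$ strongly in $C(J, X_{q})$. You propose to first prove relative compactness of $\lbrace u^{n}\rbrace$ in $\Omega_{q}$ (via the a priori bound, compactness of $S_{\alpha}$, $T_{\alpha}$, and the equicontinuity estimates of Lemma~\ref{Lemma 3.3} extended to the controlled system), extract a further subsequence converging to some $u^{0}$, and then identify $u^{0}$ as the mild solution for the limit controls by passing to the limit in the integral equation --- implicitly also invoking uniqueness (Theorem~\ref{Theorem 3.2}). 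The paper instead takes $u^{0}$ to be, by definition, the mild solution corresponding to $(\mathfrak{u}^{0}_{1},\ldots,\mathfrak{u}^{0}_{k})$ (which exists by Corollary~\ref{Theorem 4.1}), decomposes $u^{m}-u^{0}$ into $k+2$ terms, bounds the $h$-term and the $f$-term by the Lipschitz estimates ($F_{5}$) and ($F_{3}$), sends the $k$ control terms to zero strongly by Lemma~\ref{Lemma 4.1}, and closes the estimate with the singular version of Gronwall's inequality. The paper's route is shorter: it needs no relative compactness of the state sequence, no further subsequence, and no separate limit-identification argument, since Gronwall delivers convergence to the prescribed candidate directly. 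Your route would buy some extra generality (it could survive with $f$ and $h$ merely continuous rather than Lipschitz), but as written it carries exactly the burden you flag yourself: the compactness and equicontinuity estimates must be re-derived for the perturbed system, which the paper's Gronwall argument sidesteps entirely.
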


\begin{proof}
Assume that $\inf\lbrace\mathcal{J}(u^{\mathfrak{u}_{1},\ldots,\mathfrak{u}_{k}},
\mathfrak{u}_{1},\ldots,\mathfrak{u}_{k})\vert u^{\mathfrak{u}_{j}}
\in U_{ad}\rbrace=\epsilon <+\infty$. Using assumptions ($F_{7}$)--($F_{10}$),
we have $\epsilon>-\infty$. By definition of infimum, there exists a minimizing feasible multi-pair
$\lbrace(u^{m}, \mathfrak{u}^{m}_{1},\ldots,\mathfrak{u}^{m}_{k})\rbrace\subset\mathcal{U}_{ad}$ sequence,
where $\mathcal{U}_{ad}=\lbrace (u, \mathfrak{u}_{1},\ldots,\mathfrak{u}_{k})\vert u$ is a mild solution
of system \eqref{eq:4.1}--\eqref{eq:4.2} corresponding to $\mathfrak{u}_{1},\ldots,\mathfrak{u}_{k}\in U_{ad}\rbrace$,
such that $\mathcal{J}(u^{m}, \mathfrak{u}^{m}_{1},\ldots,\mathfrak{u}^{m}_{k})\rightarrow\epsilon$ as $m\rightarrow+\infty$.
Since $\lbrace(\mathfrak{u}^{m}_{1},\ldots,\mathfrak{u}^{m}_{k})\rbrace\subseteq U_{ad}$,
$m=1,2,\ldots, \lbrace(\mathfrak{u}^{m}_{1},\ldots,\mathfrak{u}^{m}_{k})\rbrace$
is bounded in $L^{p}(J, Z)$ and there exists a subsequence, still denoted by
$\lbrace(\mathfrak{u}^{m}_{1},\ldots,\mathfrak{u}^{m}_{k})\rbrace$,
$\mathfrak{u}^{0}_{1},\ldots,\mathfrak{u}^{0}_{k}\in L^{p}(J, Z)$, such that
$$
\left(\mathfrak{u}^{m}_{1},\ldots,\mathfrak{u}^{m}_{k}\right)
\stackrel{\text{weakly}}{\longrightarrow}\left(
\mathfrak{u}^{0}_{1},\ldots,\mathfrak{u}^{0}_{k}\right)
$$
in $L^{p}(J, Z)$. Since $U_{ad}$ is closed and convex,
by Marzur lemma $\mathfrak{u}^{0}_{1},\ldots,\mathfrak{u}^{0}_{k}\in U_{ad}$.
Suppose $u^{m}(u^{0})$ is the mild solution of system \eqref{eq:4.1}--\eqref{eq:4.2}
corresponding to $\mathfrak{u}^{m}_{1}(\mathfrak{u}^{0}_{1}),\ldots$,
$\mathfrak{u}^{m}_{k}(\mathfrak{u}^{0}_{k})$. Functions $u^{m}$ and $u^{0}$ satisfy,
respectively, the following integral equations:
\begin{multline*}
u^{m}(t) = S_{\alpha}(t)LM^{-1}\left[v_{0}+\frac{1}{\Gamma(1-\alpha)}
\int_{0}^{t}\frac{[u_{0}+h(u^{m}(s))]}{(t-s)^{\alpha}}ds\right]\\
+\int_{0}^{t}(t-s)^{\alpha-1}T_{\alpha}(t-s)\left[f(s, W^{m}(s))+
\int_{0}^{s}[\mathcal{B}_{1}\mathfrak{u}^{m}_{1}(\eta)
+\cdots+\mathcal{B}_{k}\mathfrak{u}^{m}_{k}(\eta)]d\eta\right]ds,
\end{multline*}
\begin{multline*}
u^{0}(t) = S_{\alpha}(t)LM^{-1}\left[v_{0}+\frac{1}{\Gamma(1-\alpha)}
\int_{0}^{t}\frac{[u_{0}+h(u^{0}(s))]}{(t-s)^{\alpha}}ds\right]\\
+\int_{0}^{t}(t-s)^{\alpha-1}T_{\alpha}(t-s)\left[f(s, W^{0}(s))
+\int_{0}^{s}[\mathcal{B}_{1}\mathfrak{u}^{0}_{1}(\eta)
+\cdots+\mathcal{B}_{k}\mathfrak{u}^{0}_{k}(\eta)]d\eta\right]ds.
\end{multline*}
It follows from the boundedness of $\lbrace\mathfrak{u}^{m}_{1}\rbrace,
\ldots,\lbrace\mathfrak{u}^{m}_{k}\rbrace, \lbrace\mathfrak{u}^{0}_{1}
\rbrace,\ldots,\lbrace\mathfrak{u}^{0}_{k}\rbrace$ and Theorem~\ref{Theorem 3.1}
that there exists a positive number $\rho$ such that
$\Vert u^{m}\Vert_{\infty}, \Vert u^{0}\Vert_{\infty}\leq \rho$.
For $t\in J$, we have
$$
\Vert u^{m}(t)-u^{0}(t)\Vert_{q}\leq \Vert\xi^{(1)}_{m}(t)\Vert_{q}
+\Vert\xi^{(2)}_{m}(t)\Vert_{q}+\Vert\xi^{(3)}_{m}(t)\Vert_{q}
+\cdots+\Vert\xi^{(k+2)}_{m}(t)\Vert_{q},
$$
where
\begin{eqnarray*}
\xi^{(1)}_{m}(t)&=&S_{\alpha}(t)LM^{-1}\frac{1}{\Gamma(1-\alpha)}
\int_{0}^{t}\frac{[h(u^{m}(s))-h(u^{0}(s))]}{(t-s)^{\alpha}}ds,\\
\xi^{(2)}_{m}(t)&=&\int_{0}^{t}(t-s)^{\alpha-1}T_{\alpha}(t-s)[f(s, W^{m}(s))-f(s, W^{0}(s))]ds,\\
\xi^{(3)}_{m}(t)&=&\int_{0}^{t}(t-s)^{\alpha-1}T_{\alpha}(t-s)
\int_{0}^{s}\mathcal{B}_{1}[\mathfrak{u}^{m}_{1}(\eta)-\mathfrak{u}^{0}_{1}(\eta)]d\eta ds,\\
&\vdots&\\
\xi^{(k+2)}_{m}(t)&=&\int_{0}^{t}(t-s)^{\alpha-1}T_{\alpha}(t-s)
\int_{0}^{s}\mathcal{B}_{k}[\mathfrak{u}^{m}_{k}(\eta)-\mathfrak{u}^{0}_{k}(\eta)]d\eta ds.
\end{eqnarray*}
The assumption ($F_{5}$) gives
$$
\Vert\xi^{(1)}_{m}(t)\Vert_{q}\leq C_{1}C_{2}M_{0}k_{1}\Vert L\Vert
\frac{a^{1-\alpha}}{\Gamma(2-\alpha)}\Vert u^{m}-u^{0}\Vert_{q}.
$$
Using Lemma~\ref{Lemma 2.2} (d) and ($F_{3}$),
$$
\Vert\xi^{(2)}_{m}(t)\Vert_{q}\leq L_{f}(\rho)\sum\limits_{i=1}^{r}
m_{i}(t)\frac{\alpha C_{1}M_{q}\Gamma(2-q)}{\Gamma(1+\alpha(1-q))}
\int_{0}^{t}(t-s)^{-q\alpha+\alpha-1}\Vert u^{m}(s)-u^{0}(s)\Vert_{q}ds.
$$
From Lemma~\ref{Lemma 4.1}, we get
$$
\xi^{(j+2)}_{m}(t)\stackrel{\text{strongly}}{\longrightarrow}0\
\text{ in }\  X_{q}~\text{as}~m\rightarrow\infty,
\quad j=\overline{1,k}.
$$
Thus,
\begin{multline*}
\Vert u^{m}(t)-u^{0}(t)\Vert_{q}
\leq \sum\limits_{j=1}^{k}\Vert\xi^{(j+2)}_{m}(t)\Vert_{q}
+C_{1}C_{2}M_{0}k_{1}\Vert L\Vert\frac{a^{1-\alpha}}{\Gamma(2-\alpha)}\Vert u^{m}-u^{0}\Vert_{q}\\
+ L_{f}(\rho)\sum\limits_{i=1}^{r}m_{i}(t)\frac{\alpha C_{1}M_{q}\Gamma(2-q)}{\Gamma(1+\alpha(1-q))}
\int_{0}^{t}(t-s)^{-q\alpha+\alpha-1}\Vert u^{m}(s)-u^{0}(s)\Vert_{q}ds.
\end{multline*}
By virtue of the singular version of Gronwall's inequality, there exists $M_{*}>0$ such that
$$
\Vert u^{m}(t)-u^{0}(t)\Vert_{q}\leq M_{*}\sum\limits_{j=1}^{k}\Vert\xi^{(j+2)}_{m}(t)\Vert_{q},
$$
which yields that
$$
u^{m}\rightarrow u^{0}~ \text{in}~ C(J, X_{q})~ \text{as}~ m\rightarrow\infty.
$$
Because $C(J, X_{q})\hookrightarrow L^{1}(J, X_{q})$, using the assumptions
($F_{7}$)--($F_{10}$) and Balder's theorem, we obtain that
\begin{equation*}
\begin{split}
\epsilon&=\lim\limits_{m\rightarrow\infty}\int_{0}^{a}\mathcal{L}(t, u^{m}(t),
\mathfrak{u}^{m}_{1}(t),\ldots,\mathfrak{u}^{m}_{k}(t))dt\\
&\geq \int_{0}^{a}\mathcal{L}(t, u^{0}(t),\mathfrak{u}^{0}_{1}(t),
\ldots,\mathfrak{u}^{0}_{k}(t))dt\\
&=\mathcal{J}(u^{0}, \mathfrak{u}^{0}_{1},\ldots,\mathfrak{u}^{0}_{k})\\
&\geq \epsilon.
\end{split}
\end{equation*}
This shows that $\mathcal{J}$ attains its minimum at
$\mathfrak{u}^{0}_{1},\ldots,\mathfrak{u}^{0}_{k}\in U_{ad}$.
\end{proof}


\section{An example}
\label{sec:5}

Consider the following fractional nonlocal multi-controlled system of Sobolev type:
\begin{equation}
\label{eq:5.1}
\frac{\partial^{\alpha}}{\partial t^{\alpha}}\biggl[u(t, x)- u_{xx}(t, x)\biggr]
+\frac{\partial^{2}}{\partial x^{2}}u(t, x)=\int_{0}^{t}[\mathfrak{u}_{1}(s, x)
+\cdots+\mathfrak{u}_{k}(s, x)]ds+F(t, D^{r}_{x}u(x, t)),
\end{equation}
\begin{equation}
\label{eq:5.2}
u(0, x)=\frac{\partial^{2}}{\partial x^{2}}\left[v_{0}(x)+\sum\limits_{\eta=1}^{m}
\frac{c_{\eta}}{\Gamma(1-\alpha)}\int_{0}^{t_{\eta}}\frac{u_{0}(x)
+u(s_{\eta}, x)}{(t_{\eta}-s_{\eta})^{\alpha}}ds_{\eta}\right],~x\in[0, \pi],
\end{equation}
\begin{equation}
\label{eq:5.3}
u(t, 0)=u(t, \pi)=0, \quad 0<t \leq 1,
\end{equation}
where $0<\alpha\leq1$, $0< t_{1}<\cdots<t_{m}< 1$ and $c_{\eta}$ are positive constants,
$\eta=1,\ldots,m$; the functions $u(t)(x)=u(t, x), f (t, \cdot)=F(t, \cdot), W(t)(x)
=D^{r}_{x}u(x, t)$ and $h(u(t))(x)=\sum_{\eta=1}^{m}c_{\eta}u(t_{\eta}, x)$.
Let us take $\mathcal{B}_{j}\mathfrak{u}_{j}(t)(x)=\mathfrak{u}_{j}(t, x)$,
$j=\overline{1,k}$, and the operator $D^{r}_{x}$ as follows:
$$
D^{r}_{x}u(x, t)=\left(\partial_{x}u(x, t),
\partial^{2}_{x}u(x, t),\ldots,\partial^{r}_{x}u(x, t)\right).
$$
Let $X=Y=Z= L^{2}[0, \pi]$. Define the operators $L$, $E$, and $M$ on domains and ranges
contained in $L^{2}[0, \pi]$ by $Lw=w-w^{\prime\prime}$, $Ew=-w^{\prime\prime}$
and $M^{-1}w=w^{\prime\prime}$, where the domains $D(L)$, $D(E)$ and $D(M)$ are given by
$$
\lbrace w\in X: w, w^{\prime} \  \text{ are absolutely continuous},\
w^{\prime\prime}\in X, w(0)=w(\pi)=0\rbrace.
$$
Then $L$ and $E$ can be written, respectively, as
$$
Lw=\sum\limits_{n=1}^{\infty}(1+n^{2})(w, w_{n})w_{n}\ \text{ and }\
Ew=\sum\limits_{n=1}^{\infty}-n^{2}(w, w_{n})w_{n},
$$
where $w_{n}(t)=(\sqrt{2/\pi})\sin nt$, $n=1,2,\ldots$,
is the orthogonal set of eigenfunctions of $E$.
Furthermore, for any $w\in X$, we have
$$
L^{-1}w=\sum\limits_{n=1}^{\infty}\frac{1}{1+n^{2}}(w, w_{n})w_{n},
\quad EL^{-1}w=\sum\limits_{n=1}^{\infty}\frac{-n^{2}}{1+n^{2}}(w, w_{n})w_{n},
$$
and
$$
Q(t)x=\sum\limits_{n=1}^{\infty}
\exp\left(\frac{-n^{2}t}{1+n^{2}}\right)(w, w_{n})w_{n}.
$$
It is easy to see that $L^{-1}$ is compact, bounded, with
$\Vert L^{-1}\Vert \leq1$, and $A=EL^{-1}$ generates the above
strongly continuous semigroup $Q(t)$ on $L^{2}[0, \pi]$
with $\Vert Q(t)\Vert\leq e^{-t}\leq 1$.
If $\mathcal{B}_{j}=0$, $j=\overline{1,k}$, then, with the above choices, system
\eqref{eq:5.1}--\eqref{eq:5.3} can be written in the form
\eqref{eq:1.1}--\eqref{eq:1.2}. Therefore, Theorems~\ref{Theorem 3.1}
and \ref{Theorem 3.2} can be applied to guarantee existence and uniqueness
of a mild solution to \eqref{eq:5.1}--\eqref{eq:5.3}.

Let the admissible control set be
$$
U_{ad}=\left\lbrace \mathfrak{u}_{j}\in Z ~ \vert ~
\sum_{j=1}^{k}\int_{0}^{t}\Vert\mathfrak{u}_{j}(s, x)\Vert_{L^{2}([0, 1], Z)}ds
\leq 1\right\rbrace.
$$
Choose $\alpha=\frac{4}{5}, p=2$ and $q=\frac{1}{4}$. Find the controls
$\mathfrak{u}_{1}(t, x),\ldots,\mathfrak{u}_{k}(t, x)$ that minimize the functional
$$
\mathcal{J}(u, \mathfrak{u}_{1},\ldots,\mathfrak{u}_{k})
=\int_{0}^{1}\int_{0}^{\pi}\vert u(t, x)\vert^{2}dx dt
+\sum\limits_{j=1}^{k}\int_{0}^{1}\int_{0}^{t}
\int_{0}^{\pi}\vert \mathfrak{u}_{j}(s, x)\vert^{2} dx ds dt
$$
subject to system \eqref{eq:5.1}--\eqref{eq:5.3}.
If $\mathcal{B}_{j}\mathfrak{u}_{j}(t)(x)=\mathfrak{u}_{j}(t, x)$,
$j=\overline{1,k}$, then system \eqref{eq:5.1}--\eqref{eq:5.3}
can be transformed into \eqref{eq:4.1}--\eqref{eq:4.2} with the cost function
$$
\mathcal{J}(\mathfrak{u}_{1},\ldots,\mathfrak{u}_{k})
=\int_{0}^{1}\left[\Vert u(t)\Vert^{2}
+\int_{0}^{t}\lbrace\Vert \mathfrak{u}_{1}(s)\Vert^{2}_{Z}
+\cdots+\Vert \mathfrak{u}_{k}(s)\Vert^{2}_{Z}\rbrace ds\right]dt.
$$
We can check that $\alpha q=\frac{4}{5}\times\frac{1}{4}
=\frac{1}{5}<1$ and $p\alpha(1-q)=2\frac{4}{5}\frac{3}{4}=\frac{6}{5}>1$.
Then all assumptions of Theorem~\ref{Theorem 4.2} are satisfied
and we conclude that the optimal control problem has an optimal pair.


\section*{Acknowledgments}

This work was partially supported by CIDMA and FCT
project PEst-OE/MAT/UI4106/2014. The first author
is very grateful to CIDMA and the Department of Mathematics
of University of Aveiro, for the hospitality and
the excellent working conditions.


\small



\begin{thebibliography}{99}

\bibitem{AMA.1}
D. Baleanu, K. Diethelm, E. Scalas\ and\ J. J. Trujillo,
{\it Fractional calculus}, Series on Complexity, Nonlinearity and Chaos, 3,
World Scientific Publishing Co. Pte. Ltd., Hackensack, NJ, 2012.

\bibitem{AMA.2}
R. Hilfer,
{\it Applications of fractional calculus in physics},
World Sci. Publishing, River Edge, NJ, 2000.

\bibitem{AMA.3}
A. A. Kilbas, H. M. Srivastava\ and\ J. J. Trujillo,
{\it Theory and applications of fractional differential equations},
North-Holland Mathematics Studies, 204, Elsevier, Amsterdam, 2006.

\bibitem{AMA.4}
I. Podlubny,
{\it Fractional differential equations},
Mathematics in Science and Engineering, 198,
Academic Press, San Diego, CA, 1999.

\bibitem{AMA.5}
V. Lakshmikantham, S. Leela\ and\ J. Vasundhara Devi,
{\it Theory of fractional dynamic systems},
Cambridge Scientific Publishers, Cambridge, 2009.

\bibitem{AMA.6}
A. B. Malinowska\ and\ D. F. M. Torres,
{\it Introduction to the fractional calculus of variations},
Imp. Coll. Press, London, 2012.

\bibitem{AMA.7}
K. S. Miller\ and\ B. Ross,
{\it An introduction to the fractional calculus and fractional differential equations},
A Wiley-Interscience Publication, Wiley, New York, 1993.

\bibitem{AMA.8}
J. Sabatier, O. P. Agrawal\ and\ J. A. Tenreiro Machado,
{\it Advances in fractional calculus}, Springer, Dordrecht, 2007.

\bibitem{AMA.9}
S. G. Samko, A. A. Kilbas\ and\ O. I. Marichev,
{\it Fractional integrals and derivatives},
translated from the 1987 Russian original,
Gordon and Breach, Yverdon, 1993.

\bibitem{AMA.10}
B. Ahmad\ and\ J. J. Nieto,
Existence of solutions for anti-periodic boundary value problems
involving fractional differential equations via Leray-Schauder degree theory,
Topol. Methods Nonlinear Anal. {\bf 35} (2010), no.~2, 295--304.

\bibitem{AMA.11}
M. R. Sidi Ammi, E. H. El Kinani\ and\ D. F. M. Torres,
Existence and uniqueness of solutions to functional integro-differential fractional equations,
Electron. J. Differential Equations {\bf 2012} (2012), no.~103, 9~pp.
{\tt arXiv:1206.3996}

\bibitem{AMA.12}
M. Kirane, A. Kadem\ and\ A. Debbouche,
Blowing-up solutions to two-times fractional differential equations,
Math. Nachr. {\bf 286} (2013) no.~17-18, 1797--1804.

\bibitem{AMA.13}
M. D. Ortigueira,
On the initial conditions in continuous time fractional linear systems,
Signal Process. {\bf 83} (2003), 2301--2309.

\bibitem{AMA.14}
F. Mainardi,
Fractional calculus: some basic problems in continuum and statistical mechanics,
in {\it Fractals and fractional calculus in continuum mechanics (Udine, 1996)}, 291--348,
CISM Courses and Lectures, 378, Springer, Vienna, 1997.

\bibitem{AMA.15}
G. M. Mophou,
Weighted pseudo almost automorphic mild solutions to semilinear fractional differential equations,
Appl. Math. Comput. {\bf 217} (2011), no.~19, 7579--7587.

\bibitem{AMA.16}
R. Sakthivel, N. I. Mahmudov\ and\ J. J. Nieto,
Controllability for a class of fractional-order neutral evolution control systems,
Appl. Math. Comput. {\bf 218} (2012), no.~20, 10334--10340.

\bibitem{AMA.17}
L. Zhang, B. Ahmad, G. Wang\ and\ R. P. Agarwal,
Nonlinear fractional integro-differential equations on unbounded domains in a Banach space,
J. Comput. Appl. Math. {\bf 249} (2013), 51--56.

\bibitem{MR2897776}
E. Bazhlekova,
Existence and uniqueness results for a fractional evolution equation in Hilbert space,
Fract. Calc. Appl. Anal. {\bf 15} (2012), no.~2, 232--243.

\bibitem{AMA.18}
M. Benchohra, E. P. Gatsori\ and\ S. K. Ntouyas,
Controllability results for semilinear evolution inclusions with nonlocal conditions,
J. Optim. Theory Appl. {\bf 118} (2003), no.~3, 493--513.

\bibitem{AMA.19}
A. Debbouche\ and\ D. Baleanu,
Controllability of fractional evolution nonlocal impulsive
quasilinear delay integro-differential systems,
Comput. Math. Appl. {\bf 62} (2011), no.~3, 1442--1450.

\bibitem{AMA.20}
A. Debbouche, D. Baleanu\ and\ R. P. Agarwal,
Nonlocal nonlinear integrodifferential equations of fractional orders,
Bound. Value Probl. {\bf 2012} (2012), no.~78, 10~pp.

\bibitem{AMA.21}
A. Debbouche\ and\ D. F. M. Torres,
Approximate controllability of fractional nonlocal delay semilinear systems in Hilbert spaces,
Internat. J. Control {\bf 86} (2013), no.~9, 1577--1585.
{\tt arXiv:1304.0082}

\bibitem{AMA.22}
G. M. N'Gu\'er\'ekata,
A Cauchy problem for some fractional abstract differential equation
with non local conditions,
Nonlinear Anal. {\bf 70} (2009), no.~5, 1873--1876.

\bibitem{AMA.23}
J. R. Wang, Y. Zhou, W. Wei\ and\ H. Xu,
Nonlocal problems for fractional integrodifferential equations
via fractional operators and optimal controls,
Comput. Math. Appl. {\bf 62} (2011), no.~3, 1427--1441.

\bibitem{AMA.24}
D. Mozyrska\ and\ D. F. M. Torres,
Minimal modified energy control for fractional linear
control systems with the Caputo derivative,
Carpathian J. Math. {\bf 26} (2010), no.~2, 210--221.
{\tt arXiv:1004.3113}

\bibitem{AMA.25}
D. Mozyrska\ and\ D. F. M. Torres,
Modified optimal energy and initial memory of fractional
continuous-time linear systems,
Signal Process. {\bf 91} (2011), no.~3, 379--385.
{\tt arXiv:1007.3946}

\bibitem{AMA.26}
X. Zhao, N. Duan\ and\ B. Liu,
Optimal control problem of a generalized Ginzburg–Landau model equation in population problems,
Math. Meth. Appl. Sci. {\bf 37} (2014), no.~3, 435--446.

\bibitem{AMA.27}
G. F. Franklin, J. D. Powell\ and\ A. Emami-Naeini,
{\it Feedback control of dynamic systems},
Prentice Hall, 6th edition, 2010.

\bibitem{AMA.28}
N. \"Ozdemir, D. Karadeniz\ and\ B. B. \.Iskender,
Fractional optimal control problem of a distributed system in cylindrical coordinates,
Phys. Lett. A {\bf 373} (2009), no.~2, 221--226.

\bibitem{AMA.29}
J. Wang\ and\ Y. Zhou,
A class of fractional evolution equations and optimal controls,
Nonlinear Anal. Real World Appl. {\bf 12} (2011), no.~1, 262--272.

\bibitem{MR3181063}
M. Jelassi\ and\ H. Mejjaoli,
Fractional Sobolev type spaces associated with a singular differential operator and applications,
Fract. Calc. Appl. Anal. {\bf 17} (2014), no.~2, 401--423.

\bibitem{AMA.30}
M. Fe\u ckan, J. Wang\ and\ Y. Zhou,
Controllability of fractional functional evolution equations
of Sobolev type via characteristic solution operators,
J. Optim. Theory Appl. {\bf 156} (2013), no.~1, 79--95.

\bibitem{MR3154621}
A. Harrat\ and\ A. Debbouche,
Sobolev type fractional delay impulsive equations with Alpha-Sobolev resolvent families and integral conditions,
Nonlinear Stud. {\bf 20} (2013), no.~4, 549--558.

\bibitem{MR3129325}
M. Kerboua, A. Debbouche\ and\ D. Baleanu,
Approximate controllability of Sobolev type nonlocal fractional stochastic dynamic systems in Hilbert spaces,
Abstr. Appl. Anal. {\bf 2013}, Art. ID 262191, 10~pp.

\bibitem{AMA.31}
F. Li, J. Liang\ and\ H. K. Xu,
Existence of mild solutions for fractional integrodifferential
equations of Sobolev type with nonlocal conditions,
J. Math. Anal. Appl. {\bf 391} (2012), no.~2, 510--525.

\bibitem{AMA.32}
E. Hille\ and\ R. S. Phillips,
{\it Functional analysis and semi-groups},
American Mathematical Society Colloquium Publications,
vol. 31, Amer. Math. Soc., Providence, RI, 1957.

\bibitem{AMA.33}
A. Pazy,
{\it Semigroups of linear operators and applications to partial differential equations},
Applied Mathematical Sciences, 44, Springer, New York, 1983.

\bibitem{AMA.34}
S. D. Zaidman,
{\it Abstract differential equations},
Research Notes in Mathematics, 36, Pitman, Boston, MA, 1979.

\bibitem{AMA.35}
H. Liu\ and\ J.-C. Chang,
Existence for a class of partial differential equations with nonlocal conditions,
Nonlinear Anal. {\bf 70} (2009), no.~9, 3076--3083.

\bibitem{AMA.36}
Y. Zhou\ and\ F. Jiao,
Nonlocal Cauchy problem for fractional evolution equations,
Nonlinear Anal. Real World Appl. {\bf 11} (2010), no.~5, 4465--4475.

\bibitem{AMA.37}
Y. Zhou\ and\ F. Jiao,
Existence of mild solutions for fractional neutral evolution equations,
Comput. Math. Appl. {\bf 59} (2010), no.~3, 1063--1077.

\bibitem{AMA.38}
E. Zeidler,
{\it Nonlinear functional analysis and its applications. II/A},
translated from the German by the author
and Leo F. Boron, Springer, New York, 1990.

\bibitem{AMA.39}
S. Hu\ and\ N. S. Papageorgiou,
{\it Handbook of multivalued analysis. Vol. I},
Mathematics and its Applications, 419,
Kluwer Acad. Publ., Dordrecht, 1997.

\end{thebibliography}
\end{document}